\definecolor{linkcolor}{RGB}{83,83,182}
\newtheorem{theorem}{Theorem}[section]
\newtheorem{lemma}[theorem]{Lemma}
\newtheorem{proposition}[theorem]{Proposition}
\newtheorem{conjecture}[theorem]{Conjecture}
\def\equationautorefname~#1\null{(#1)\null}
\newcommand{\ind}{\mathds{1}}  
\def\loss{\mathcal{L}}
\DeclareMathOperator*{\argmin}{arg\,min}
\DeclareMathOperator*{\sign}{sign}
\DeclareMathOperator{\Id}{Id}
\DeclareMathOperator{\ST}{ST}
\def\prox{\text{prox}}
\def\Rset{\mathbb R}
\def\psd{\emph{psd}}
\def\iid{\emph{i.i.d}}
\def\st{\emph{ s.t. }}
\def\ie{\emph{ i.e. }}
\def\wrt{\emph{w.r.t}}
\def\proxTV{prox-TV}
\newcommand{\changed}[1]{{{#1}}}
\title{Learning to solve TV regularized problems \\
       with unrolled algorithms}
\author{
  Hamza Cherkaoui\\
  Université Paris-Saclay, CEA, Inria \\
  Gif-sur-Yvette, 91190, France \\
  \texttt{hamza.cherkaoui@cea.fr} \\
\And
  Jeremias Sulam \\
  Johns Hopkins University \\
  \texttt{jsulam1@jhu.edu} \\
\And
  Thomas Moreau \\
  Université Paris-Saclay, Inria, CEA, \\
  Palaiseau, 91120, France \\
  \texttt{thomas.moreau@inria.fr} \\
}
\begin{document}

\maketitle

\begin{abstract}
Total Variation (TV) is a popular regularization strategy that promotes piece-wise constant signals by constraining the $\ell_1$-norm of the first order derivative of the estimated signal. The resulting optimization problem is usually solved using iterative algorithms such as proximal gradient descent\changed{, primal-dual algorithms or ADMM}. However, such methods can require a very large number of iterations to converge to a suitable solution. In this paper, we accelerate such iterative algorithms by unfolding proximal gradient descent solvers in order to learn their parameters \changed{for 1D TV regularized problems}. While this could be done using the \emph{synthesis} formulation, we demonstrate that this leads to slower performances. The main difficulty in applying such methods in the \emph{analysis} formulation lies in proposing a way to compute the derivatives through the proximal operator. \changed{As our main contribution, w}e develop and characterize two approaches to do so, describe their benefits and limitations, and discuss the regime where they can actually improve over iterative procedures. We validate those findings  \changed{with experiments on synthetic and real data}.
\end{abstract}

\section{Introduction}

Ill-posed inverse problems appear naturally in signal and image processing and machine learning, requiring extra regularization techniques.
Total Variation (TV) is a popular regularization strategy with a long history \citep{Rudin1992}, and has found a large number of applications in neuro-imaging \citep{Fikret2013}, medical imaging reconstruction \citep{Tian2011}, among myriad applications \citep{Rodriguez2013,Darbon2006}.
TV promotes piece-wise constant estimates by penalizing the $\ell_1$-norm of the first order derivative of the estimated signal, and it provides a simple, yet efficient regularization technique.

TV-regularized problems are typically convex, and so a wide variety of algorithms are in principle applicable.
Since the $\ell_1$ norm in the TV term is non-smooth, Proximal Gradient Descent (PGD) is the most popular choice \citep{Rockafellar1976}.
Yet, the computation for the corresponding proximal operator (denoted \proxTV{}) represents a major difficulty in this case as it does not have a closed-form analytic solution.
For 1D problems, it is possible to rely on dynamic programming to compute \proxTV{}, such as the taut string algorithm \citep{Davies2001,Condat2013}.
Another alternative consists in computing the proximal operator with iterative first order algorithm \citep{Chambolle2004, Beck2009, Boyd2011, Condat2013a}.
\changed{Other algorithms to solve TV-regularized problems rely on primal dual algorithms \citep{Chambolle2011, Condat2013a} or Alternating Direction Method of Multipliers (ADMM) \citep{Boyd2011}. These algorithms typically use one sequence of estimates for each term in the objective and try to make them as close as possible while minimizing the associated term. While these algorithms are efficient for denoising problems -- where one is mainly concerned with good reconstruction -- they can result in estimate that are not very well regularized if the two sequences are not close enough.}

When on fixed computational budget, iterative optimization methods can become impractical as they often require many iterations to give a satisfactory estimate.
To accelerate the resolution of these problems with a finite (and small) number of iterations, one can resort to unrolled and learned optimization algorithms (see \citealt{Monga2019} for a review).
In their seminal work, \citet{Gregor10} proposed the Learned ISTA (LISTA), where the parameters of an unfolded Iterative Shrinkage-Thresholding Algorithm (ISTA) are learned with gradient descent and back-propagation.
This allows to accelerate the approximate solution of a Lasso problem \citep{Tibshirani1996}, with a fixed number of iteration, for signals from a certain distribution.
The core principle behind the success of this approach is that the network parameters can adaptively leverage the sensing matrix structure \citep{Moreau2017} as well as the input distribution \citep{Giryes2016, Ablin2019}.
Many extensions of this original idea have been proposed to learn different algorithms \citep{Sprechmann2012, Sprechmann2013a, Borgerding2017} or for different classes of problem \citep{Xin2016, Giryes2016, Sulam2019}.
The motif in most of these adaptations is that all operations in the learned algorithms are either linear or separable, thus resulting in sub-differentials that are easy to compute and implement via back-propagation.
\changed{Algorithm unrolling is also used in the context of bi-level optimization problems such as hyper-parameter selection. Here, the unrolled architecture provides a way to compute the derivative of the inner optimization problem solution compared to another variable such as the regularisation parameter using back-propagation \citep{Bertrand2020}.}

The focus of this paper is to apply algorithm unrolling to TV-regularized problems \changed{in the 1D case}.
While one could indeed apply the LISTA approach directly to the \emph{synthesis} formulation of these problems, we show in this paper that using such formulation leads to slower iterative or learned algorithms compared to their \emph{analysis} counterparts.
The extension of learnable algorithms to the analysis formulation is not trivial, as the inner proximal operator does not have an analytical or separable expression.
We propose two architectures that can learn TV-solvers in their analysis form directly based on PGD.
The first architecture uses an exact algorithm to compute the \proxTV{} and we derive the formulation of its weak Jacobian in order to learn the network's parameters.
Our second method rely on a nested LISTA network in order to approximate the \proxTV{} itself in a differentiable way.
This latter approach can be linked to inexact proximal gradient methods \citep{Schmidt2011,Machart2012}.
These results are backed with numerical experiments on synthetic and real data.
\changed{Concurrently to our work, \citet{Lecouat2020} also proposed an approach to differentiate the solution of TV-regularized problems. While their work can be applied in the context of 2D signals, they rely on smoothing the regularization term using Moreau-Yosida regularization, which results in smoother estimates from theirs learned networks. In contrast, our work allows to compute sharper signals but can only be applied to 1D signals.}



The rest of the paper is organized as follows.
In \autoref{sec:iterative_algorithm}, we describe the different formulations for TV-regularized problems and their complexity.
We also recall central ideas of algorithm unfolding. \autoref{sec:prox_backprop} introduces our two approaches for learnable network architectures based on PGD.
Finally, the two proposed methods are evaluated on real and synthetic data in \autoref{sec:exp}.

\paragraph{Notations}
For a vector $x\in\Rset^k$, we denote $\|x\|_q$ its $\ell_q$-norm.
For a matrix $A\in\Rset^{m\times k}$, we denote $\|A\|_2$ its $\ell_2$-norm, which corresponds to its largest singular value and $A^\dagger$ denotes its pseudo-inverse.
For an ordered subset of indices $\mathcal S \subset \{1,\dots, k\}$, $x_{\mathcal S}$ denote the vector in $\Rset^{|\mathcal S|}$ with element $(x_{\mathcal S})_t = x_{i_t}$ for $i_t \in \mathcal S$.
For a matrix $A \in \Rset^{m\times k}$, $A_{:, \mathcal S}$ denotes the sub-matrix $[A_{:, i_1}, \dots A_{:, i_{|\mathcal S|}}]$ composed with the columns $A_{:, i_t}$ of index $i_t\in\mathcal S$ of $A$.
For the rest of the paper, we refer to the operators $D \in \Rset^{k-1\times k}, \widetilde D \in \Rset^{k\times k}, L \in \Rset^{k\times k}$ and $R \in \Rset^{k\times k}$ as:  \\[.5em]
{\centering
\begin{tabular}{ c c c c }
\scalebox{.7}{
$
D = \begin{bmatrix}
        -1 & 1 & 0 & \dots & 0\\
        0 & -1 & 1 & \ddots & \vdots\\
         \vdots & \ddots & \ddots & \ddots & 0 \\
        0  & \dots & 0 & -1 & 1 \\
    \end{bmatrix}
$
} &
\scalebox{.7}{
$
\widetilde D = \begin{bmatrix}
         1 & 0 & \dots &  0\\
        -1 & 1 & \ddots & \vdots\\
              & \ddots & \ddots & 0 \\
              & 0 & -1 & 1 \\
    \end{bmatrix}
$
} &
\scalebox{.7}{
$
L = \begin{bmatrix}
        1 & 0 & \dots & 0 \\
        1 & 1 & \ddots & \vdots\\
        \vdots & \ddots & \ddots & 0\\
        1 & \dots & 1 & 1\\
    \end{bmatrix}
$
} &
\scalebox{.7}{
$
R = \begin{bmatrix}
        0 & 0 & \dots & 0\\
        0 & 1 & \ddots & \vdots\\
        \vdots & \ddots & \ddots & 0\\
        0 & \dots & 0 & 1\\
    \end{bmatrix}
$
} \\
\end{tabular}
}

\section{Solving TV-regularized problems}
\label{sec:iterative_algorithm}

We begin by detailing the TV-regularized problem that will be the main focus of our work.
Consider a latent vector $u\in\Rset^k$, a design matrix $A \in \Rset^{m\times k}$ and the corresponding observation $x \in \Rset^m$.
The original formulation of the TV-regularized regression problem is referred to as the \emph{analysis} formulation \citep{Rudin1992}.
For a given regularization parameter $\lambda > 0$, it reads
\begin{equation}
    \label{eq:tv_analysis}
    \min_{u \in \Rset^k} P(u) = \frac12 \| x - Au\|_2^2 + \lambda \|u\|_{TV},
\end{equation}
where $\|u\|_{TV} = \|Du\|_1$, and $D \in \Rset^{k-1\times k}$ stands for the first order finite difference operator, as defined above.
The problem in \autoref{eq:tv_analysis} can be seen as a special case of a Generalized Lasso problem \citep{Tibshirani2011}; one in which the analysis operator is $D$.
Note that problem $P$ is convex, but the $TV$-norm is non-smooth.
In these cases, a practical alternative is the PGD, which iterates between a gradient descent step and the \proxTV{}. This algorithm's iterates read
\begin{align}
    \label{eq:analysis_tv_PGD}
    u^{(t+1)} = \prox_{\frac{\lambda}{\rho}\|\cdot\|_{TV}}\left(u^{(t)} - \frac{1}{\rho}A^\top(Au^{(t)} - x)\right)
    \enspace,
\end{align}
where $\rho = \|A\|_2^2$ and the \proxTV{} is defined as
\begin{equation}
    \label{eq:analysis_proximal_op}
    \prox_{\mu\|\cdot\|_{TV}}(y) = \argmin_{u \in \Rset^k} F_y(u) = \frac12 \|y - u\|_2^2 + \mu\|u\|_{TV}.
\end{equation}

Problem \autoref{eq:analysis_proximal_op} does not have a closed-form solution, and one needs to resort to iterative techniques to compute it.
In our case, as the problem is 1D, the \proxTV{} problem can be addressed with a dynamic programming approach, such as the taut-string algorithm \citep{Condat2013}.
This scales as $O(k)$ in all practical situations and is thus much more efficient than other optimization based iterative algorithms \citep{Rockafellar1976, Chambolle2004, Condat2013a} for which each iteration is $O(k^2)$ at best.

With a generic matrix $A\in\Rset^{m\times k}$, the PGD algorithm is known to have a sublinear convergence rate \citep{Combettes2011}.
More precisely, for any initialization $u^{(0)}$ and solution $u^*$, the iterates satisfy
\begin{equation}
    \label{eq:analysis_rate}
    P(u^{(t)}) - P(u^*) \le \frac{\rho}{2t}\|u^{(0)} - u^*\|_2^2,
\end{equation}
where $u^*$ is a solution of the problem in \autoref{eq:tv_analysis}. Note that the constant $\rho$ can have a significant effect. Indeed, it is clear from \autoref{eq:analysis_rate} that doubling $\rho$ leads to consider doubling the number of iterations.

\subsection{Synthesis formulation}
\label{sub:synthesis_tv}

An alternative formulation for TV-regularized problems relies on removing the analysis operator $D$ from the $\ell_1$-norm and translating it into a synthesis expression \citep{Elad2007}.
Removing $D$ from the non-smooth term simplifies the expression of the proximal operator by making it separable, as in the Lasso.
The operator $D$ is not directly invertible but keeping the first value of the vector $u$ allows for perfect reconstruction.
This motivates the definition of the operator $\widetilde D \in \Rset^{k\times k}$, and its inverse $L\in \Rset^{k\times k}$, as defined previously.
Naturally, $L$ is the discrete integration operator.
Considering the change of variable $z = \widetilde D u$, and using the operator $R \in \Rset^{k\times k}$, the problem in \autoref{eq:tv_analysis} is equivalent to
\begin{equation}
    \label{eq:tv_synthesis}
    \min_{z \in \Rset^{k}} S(z) = \frac 12 \|x - A L z\|_2^2 + \lambda\|R z\|_1.
\end{equation}
Note that for any $z \in \Rset^k$, $S(z) = P(Lz)$.
There is thus an exact equivalence between solutions from the synthesis and the analysis formulation, and the solution for the analysis can be obtained with $u^{*}=Lz^{*}$.
The benefit of this formulation is that the problem above now reduces to a Lasso problem \citep{Tibshirani1996}. In this case, the PGD algorithm is reduced to the ISTA with a closed-form proximal operator (the soft-thresholding).
Note that this simple formulation is only possible in 1D where the first order derivative space is unconstrained.
In larger dimensions, the derivative must be constrained to verify the Fubini's formula that enforces the symmetry of integration over dimensions.
While it is also possible to derive synthesis formulation in higher dimension \citep{Elad2007}, this does not lead to simplistic proximal operator.

For this synthesis formulation, with a generic matrix $A\in\Rset^{m\times k}$, the PGD algorithm has also a sublinear convergence rate \citep{Beck2009} such that
\begin{equation}
\label{eq:cvg_rate_synthesis}
    P(u^{(t)}) - P(u^*) \le \frac{2\widetilde\rho}{t}\|u^{(0)} - u^*\|_2^2,
\end{equation}
with $\widetilde\rho = \|AL\|_2^2$ (see \autoref{sub:proof:cvg_rate_synthesis} for full derivation).
While the rate of this algorithm is the same as in the analysis formulation  -- in $O(\frac 1t)$ -- the constant $\widetilde\rho$ related to the operator norm differs.
We now present two results that will characterize the value of $\widetilde\rho$.

\begin{restatable}{proposition}{speedLowerBound}[Lower bound for the ratio $\frac{\|AL\|^2_2}{\|A\|^2_2}$ expectation]
    \label{prop:lower_bound_AL}
    Let $A$ be a random matrix in $\Rset^{m\times k}$ with \iid{} \changed{normally distributed entries}. The expectation of $\|AL\|^2_2/\|A\|_2^2$ is asymptotically lower bounded when $k$ tends to $\infty$ by
    \[
        \mathbb E\left[\frac{\|AL\|_2^2}{\|A\|_2^2}\right] \ge
            \frac{2k + 1}{4\pi^2} + o(1)
    \]
\end{restatable}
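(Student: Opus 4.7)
The plan is to lower bound $\|AL\|_2^2$ by evaluating the Rayleigh quotient along the top singular direction of $L$, to compute $\sigma_1(L)^2$ via the explicit eigenstructure of $\widetilde D^\top \widetilde D$, and to handle the coupling between $\|AL\|_2^2$ and $\|A\|_2^2$ using the rotational invariance of the Gaussian law of $A$.

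First, let $v^*$ denote a unit top right singular vector of $L$ and $u^* = L v^* / \sigma_1(L)$ the corresponding unit left singular vector; both are deterministic. The variational definition of the operator norm gives $\|AL\|_2^2 \geq \|A L v^*\|^2 = \sigma_1(L)^2 \|A u^*\|^2$, hence
\[
    \mathbb E\left[\frac{\|AL\|_2^2}{\|A\|_2^2}\right] \geq \sigma_1(L)^2 \, \mathbb E\left[\frac{\|A u^*\|^2}{\|A\|_2^2}\right].
\]
To compute $\sigma_1(L)^2$, I would use $L = \widetilde D^{-1}$, so that $\sigma_1(L)^2 = 1/\lambda_{\min}(\widetilde D^\top \widetilde D)$. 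The matrix $\widetilde D^\top \widetilde D$ is tridiagonal with diagonal $(2,\dots,2,1)$ and off-diagonals $-1$, a discrete Laplacian with mixed Dirichlet-Neumann boundary conditions. The sinusoidal ansatz $v_j = \sin(\omega j)$ satisfies the Dirichlet boundary automatically, while the last-row equation reduces to $v_{k+1} = v_k$, yielding admissible frequencies $\omega_n = (2n+1)\pi/(2k+1)$ and eigenvalues $4 \sin^2(\omega_n/2)$ for $n = 0, \dots, k-1$. The smallest is $4\sin^2(\pi/(2(2k+1)))$, so $\sigma_1(L)^2 = (2k+1)^2/\pi^2 + o(k^2)$ as $k \to \infty$.

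Next, I would decouple the random ratio using rotational invariance. For any orthogonal $R \in \Rset^{k \times k}$, $AR$ has the same law as $A$ and $\|AR\|_2 = \|A\|_2$; choosing $R$ whose first column equals $u^*$ gives $(AR) e_1 = A u^*$, so $\|A u^*\|^2 / \|A\|_2^2$ has the same distribution as $\|A e_1\|^2/\|A\|_2^2$. By symmetry over the columns,
\[
    \mathbb E\left[\frac{\|A u^*\|^2}{\|A\|_2^2}\right] = \frac{1}{k}\,\mathbb E\left[\frac{\sum_{i=1}^k \|A e_i\|^2}{\|A\|_2^2}\right] \geq \frac{1}{k},
\]
using the pointwise bound $\sum_i \|A e_i\|^2 \geq \|A\|_2^2$. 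Combining,
\[
    \mathbb E\left[\frac{\|AL\|_2^2}{\|A\|_2^2}\right] \geq \frac{\sigma_1(L)^2}{k} = \frac{(2k+1)^2}{\pi^2 k} + o(k),
\]
which dominates $\frac{2k+1}{4\pi^2} + o(1)$ for all large $k$.

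The main obstacle is the dependence between $\|AL\|_2^2$ and $\|A\|_2^2$: $\mathbb E[X/Y]$ cannot in general be bounded below by $\mathbb E[X]/\mathbb E[Y]$, and standard inequalities such as Jensen or Cauchy-Schwarz do not give the correct direction in a single shot. The key observation is that the test direction $u^*$ depends only on $L$ (and not on $A$), so the orthogonal change of frame that aligns $u^*$ with $e_1$ leaves the joint law of $A$ and of $\|A\|_2$ invariant, reducing the random ratio to a symmetric average handled by the crude Frobenius-vs-operator-norm bound. The other technical input is the asymptotic expansion of $\lambda_{\min}(\widetilde D^\top \widetilde D)$, which hinges on correctly reading off the mixed Dirichlet-Neumann boundary conditions inherited from $\widetilde D$.
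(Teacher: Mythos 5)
Your proof is correct, and it takes a genuinely different route from the paper's. The paper lower-bounds $A^\top A \succeq \|A\|_2^2 u_1 u_1^\top$ using the \emph{random} top eigenvector $u_1$ of the Wishart matrix $A^\top A$ (uniform on the sphere by a result of Silverstein), which forces it to average $\|\Sigma w_1\|_2^2$ over the whole spectrum of $L$; this requires the full set of singular values $\sigma_l = \frac{1}{2\cos(\pi l/(2k+1))}$ and a somewhat delicate Riemann-sum estimate of $\sum_l \sigma_l^2$ (the integrand $1/\cos^2$ is not integrable on $[0,\pi/2)$, so the paper argues via monotonicity). You instead test $AL$ against the \emph{deterministic} top singular direction of $L$, so you only need the single extreme eigenvalue of $\widetilde D^\top \widetilde D$ (your mixed Dirichlet--Neumann computation matches the paper's Chebyshev-polynomial derivation: $4\sin^2(\pi/(2(2k+1)))$ is exactly the paper's $\mu_k = 4\cos^2(\pi k/(2k+1))$), and you dispose of the coupling between numerator and denominator with the elementary chain $\mathbb E[\|Au^*\|^2/\|A\|_2^2] = \mathbb E[\|Ae_1\|^2/\|A\|_2^2] = \frac1k\mathbb E[\|A\|_F^2/\|A\|_2^2] \ge \frac1k$. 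This is cleaner, avoids the external Wishart fact and the divergent-integral estimate, and actually yields the stronger asymptotic bound $\frac{(2k+1)^2}{\pi^2 k}\sim \frac{4k}{\pi^2}$, compared to the $\frac{k+1/2}{\pi^2}$ obtained in the paper's proof (both of which imply the stated $\frac{2k+1}{4\pi^2}$, and both of which remain $\Theta(k)$, far from the conjectured $\Theta(k^2)$ truth). One shared caveat: both arguments implicitly require the Gaussian entries to be centered --- yours for the rotation/permutation invariance of the law of $A$, the paper's for the uniformity of the Wishart eigenvectors --- which is consistent with the intended reading of the hypothesis.
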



\begin{figure}[t]
    \begin{minipage}{.55\textwidth}
        \includegraphics[width=\textwidth]{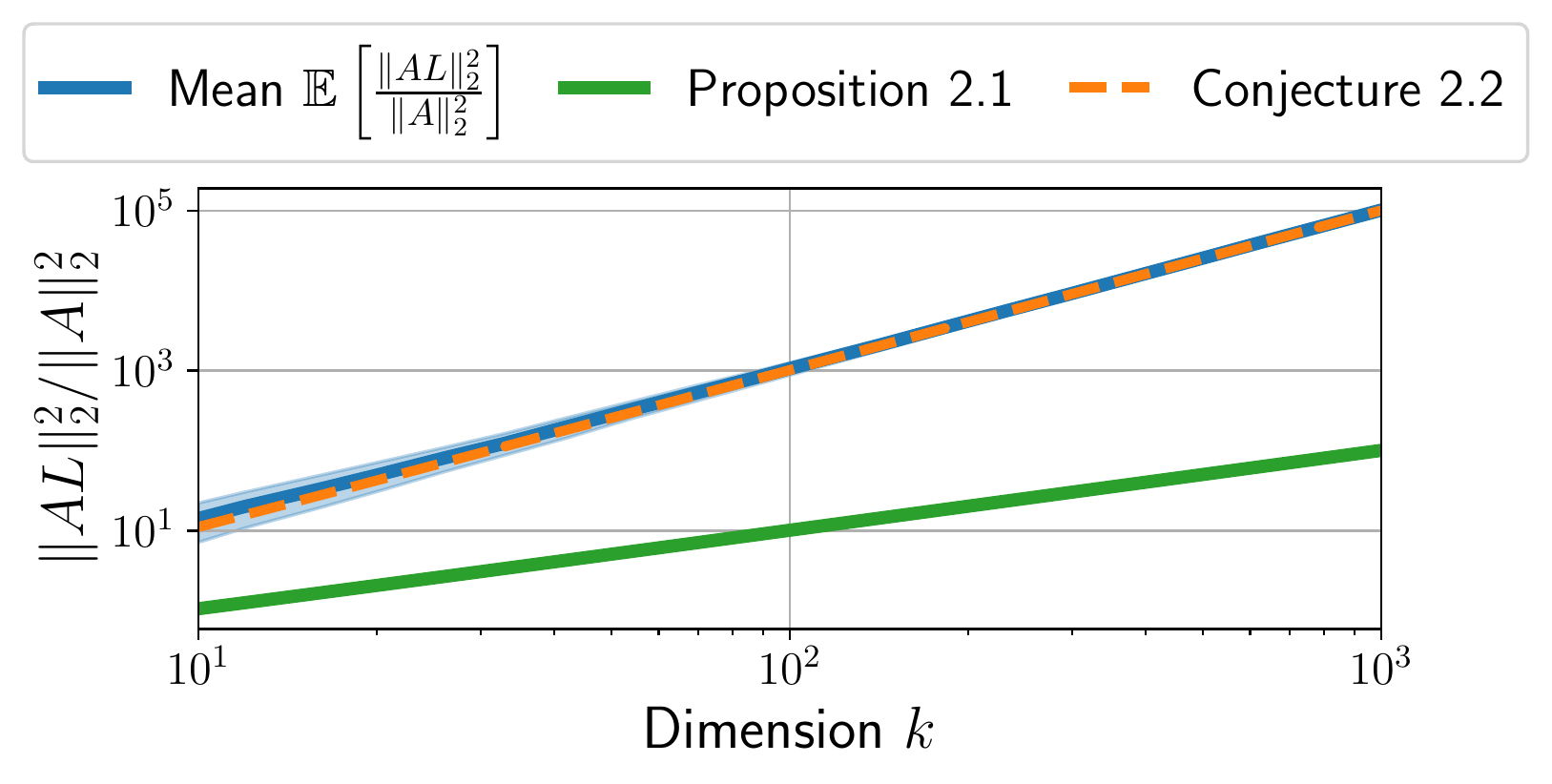}
    \end{minipage}%
    \begin{minipage}{.45\textwidth}
    	\caption{Evolution of $\mathbb E\Big[\frac{\|AL\|^2_2}{\|A\|^2_2}\Big]$ \wrt{} the dimension $k$ for random matrices $A$ with \iid{} \changed{normally distributed entries}. In light blue is the confidence interval [0.1, 0.9] computed with the quantiles. We observe that it scales as $O(k^2)$ and that our conjectured bound seems tight.}
      \label{fig:comparison_AL_A}
    \end{minipage}%
\end{figure}

The full proof can be found in \autoref{sub:proof:prop:lower_bound_AL}.
The lower bound is constructed by using \mbox{$A^TA \succeq \|A\|_2^2u_1u_1^\top$} for a unit vector $u_1$ and computing explicitely the expectation for rank one matrices.
To assess the tightness of this bound, we evaluated numerically $\mathbb E\left[\frac{\|AL\|^2_2}{\|A\|^2_2}\right]$ on a set of $1000$ matrices sampled with \iid{} \changed{normally distributed entries}. The results are displayed \wrt{} the dimension $k$ in \autoref{fig:comparison_AL_A}. It is clear that the lower bound from \autoref{prop:lower_bound_AL} is not tight. This is expected as we consider only the leading eigenvector of $A$ to derive it in the proof. The following conjecture gives a tighter bound.

\begin{conjecture}[Expectation for the ratio $\frac{\|AL\|^2_2}{\|A\|^2_2}$]
   \label{conj:expectation_AL_A}
    Under the same conditions as in \autoref{prop:lower_bound_AL}, the expectation of $\|AL\|_2^2 /\|A\|_2^2$ is given by
    \[
        \mathbb E\left[\frac{\|AL\|_2^2}{\|A\|_2^2}\right]
           = \frac{(2k+1)^2}{16\pi^2} + o(1)
           \enspace .
    \]
\end{conjecture}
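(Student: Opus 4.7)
The plan is to refine the rank-one bound of \autoref{prop:lower_bound_AL} into an asymptotic equality by tracking the entire spectrum of $A^{\top}A$ rather than only its leading eigenvector. Writing the SVD $A = U\Sigma V^{\top}$, for Gaussian $A$ the matrix $V$ is Haar-distributed on $O(k)$ and independent of $\Sigma$, so
\[
\frac{\|AL\|_2^2}{\|A\|_2^2} \;=\; \frac{\lambda_{\max}\!\bigl(\Sigma^2\, V^{\top} L L^{\top} V\bigr)}{\sigma_1(A)^2}.
\]
Two pieces of explicit structure would then drive the asymptotics. First, the eigenvalues of $L L^{\top}$ are known in closed form, $\mu_j = 1/(4\sin^2((2j-1)\pi/(2(2k+1))))$, with $\mu_1 \sim (2k+1)^2/\pi^2$ and eigenvectors that are discretized sinusoids. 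Second, for Gaussian $A$ in the regime $m/k \to c$, the empirical spectrum of $A^{\top}A/m$ converges to a Marchenko--Pastur law and $\sigma_1(A)^2 \sim (\sqrt{m}+\sqrt{k})^2$.

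With these ingredients, the core computation would be to locate the top eigenvalue of $\Sigma^2 V^{\top} LL^{\top} V$ via a BBP-style spiked-covariance analysis: the few eigenvalues of $LL^{\top}$ of order $k^2$ act as spikes on top of a bulk, and each spike above the phase-transition threshold should produce an outlier whose location is pinned by a Stieltjes-transform fixed-point equation against the limiting Marchenko--Pastur law. Matching constants should reveal that the top outlier equals $\mu_1$ times a factor proportional to $m/(\sqrt{m} + \sqrt{k})^2$, and dividing by $\sigma_1(A)^2 \sim (\sqrt{m}+\sqrt{k})^2$ would recover the conjectured asymptotic $(2k+1)^2/(16\pi^2)$ in the regime used for the numerical experiment.

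The main obstacle is that the standard BBP theorem is stated for a single spike atop a standardised identity bulk, whereas $LL^{\top}$ has several large eigenvalues on top of a non-trivial bulk that itself spans $(0, O(k^2))$. A rigorous proof would therefore need either an extension of BBP to this multi-spike, non-standardised setting --- most likely via a direct Stieltjes-transform analysis of the deformed Marchenko--Pastur law or via free-probability subordination --- or an independent variational argument. A plausible route in the latter direction is to guess the near-maximizer $x^\star$ of $\|ALx\|_2^2/\|x\|_2^2$ as a low-frequency discrete sinusoid matching the top right singular vector of $L$, compute $\|Lx^\star\|$ exactly and $\|Ax^\star\|_2^2/\|A\|_2^2$ via Gaussian concentration, then certify near-optimality up to $o(k^2)$ using an $\epsilon$-net argument over candidate directions. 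In either route, the delicate step is to control the interaction between the non-standard bulk of $LL^{\top}$ and the random orientation of $V$ relative to $\Sigma$.
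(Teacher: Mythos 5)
First, note that the paper offers no proof of this statement: it is explicitly left as a conjecture, with only the remark that one would need to extend the rank-one computation of \autoref{prop:lower_bound_AL} by integrating jointly over all (dependent) eigenvector directions. Your proposal is therefore not being measured against an existing argument; on its own it is a research plan rather than a proof, and the plan has concrete gaps. The decisive step --- ``matching constants should reveal'' that the top eigenvalue of $\Sigma V^\top LL^\top V\Sigma$, normalised by $\sigma_1(A)^2$, equals $(2k+1)^2/(16\pi^2)$ --- is precisely the content of the conjecture and is nowhere carried out. Worse, the arithmetic you sketch cannot produce an $m$-free constant: an outlier of size $\mu_1\cdot c\,m/(\sqrt m+\sqrt k)^2$ divided by $\sigma_1(A)^2\sim(\sqrt m+\sqrt k)^2$ depends on the aspect ratio $m/k$, whereas the conjectured value $\mu_1/16$ does not. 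Indeed the true answer must depend on the regime: for $k$ fixed and $m\to\infty$ one has $A^\top A/m\to \Id$ and the ratio tends to $\|L\|_2^2=(2k+1)^2/\pi^2+o(1)$, sixteen times the conjectured value, while for $m=1$ the exact rank-one computation gives $\frac1k\mathrm{tr}(LL^\top)=(k+1)/2$, which is $O(k)$, not $O(k^2)$. Any proof must therefore begin by pinning down the $(m,k)$ regime in which the constant $1/16$ is supposed to emerge; your sketch leaves this unspecified.

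Second, the spiked-covariance picture does not fit the spectrum of $LL^\top$. By \autoref{lemma:spectral_radius_L} the eigenvalues are $\mu_j=1/(4\sin^2((2j-1)\pi/(2(2k+1))))\asymp(2k+1)^2/(\pi^2(2j-1)^2)$, so that $\sum_j\mu_j=\mathrm{tr}(LL^\top)=k(k+1)/2$: this is not ``a few spikes of order $k^2$ above an $O(1)$ bulk'' but a full power-law family in which every level $j=O(1)$ contributes at the same order $k^2$ as the leading one. A BBP-type result (single spike over a standardised bulk) is therefore not a small extension away; all the $\mu_j$ interact with the Gaussian spectrum at leading order and would all enter the constant. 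You also silently replace $\mathbb E[\|AL\|_2^2/\|A\|_2^2]$ by a ratio of deterministic limits, which requires a concentration argument for numerator and denominator that is unavailable when $m$ is small and fixed (the setting of the paper's experiments). The variational route you mention at the end (test vector equal to the top right singular vector of $L$, certified by an $\epsilon$-net) is the more tractable direction, but as written it only yields a lower bound of the same flavour as \autoref{prop:lower_bound_AL}, not the claimed equality.
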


We believe this conjecture can potentially be proven with analogous developments as those in \autoref{prop:lower_bound_AL}, but integrating over all dimensions.
However, a main difficulty lies in the fact that integration over all eigenvectors have to be carried out jointly as they are not independent.
This is subject of current ongoing work.

Finally, we can expect that $\widetilde \rho /\rho$ scales as $\Theta(k^2)$.
This leads to the observation that $\frac{\widetilde\rho}{2} \gg \rho$ in large enough dimension.
As a result, the analysis formulation should be much more efficient in terms of iterations than the synthesis formulation -- as long as the \proxTV can be dealt with efficiently.

\subsection{Unrolled iterative algorithms}
\label{sub:unrolled}

As shown by \citet{Gregor10}, ISTA is equivalent to a recurrent neural network (RNN) with a particular structure.
This observation can be generalized to PGD algorithms for any penalized least squares problem of the form
\begin{equation}
    \label{eq:general_problem}
    u^*(x) = \argmin_u \loss(x, u) = \frac12 \|x - Bu\|_2^2 + \lambda g(u)
    \enspace ,
\end{equation}
where $g$ is proper and convex, as depicted in \autoref{fig:network_pgd}.
By unrolling this architecture with $T$ layers, we obtain a network $\phi_{\Theta^{(T)}}(x) = u^{(T)}$ -- illustrated in \autoref{fig:lpgd_nn} -- with parameters \mbox{$\Theta^{(T)} = \{W_x^{(t)}, W_u^{(t)}, \mu^{(t)}\}_{t=1}^T$}, defined by the following recursion
\begin{equation}
    \label{eq:nn_recursion}
    u^{(0)} = B^\dagger x~;\qquad u^{(t)} = \prox_{\mu^{(t)}g}(W_x^{(t)}x + W_u^{(t)}u^{(t-1)})
    \enspace .
\end{equation}
As underlined by \autoref{eq:analysis_rate}, a good estimate $u^{(0)}$ is crucial in order to have a fast convergence toward $u^*(x)$.
However, this chosen initialization is mitigated by the first layer of the network which learns to set a good initial guess for $u^{(1)}$.
For a network with $T$ layers, one recovers exactly the $T$-th iteration of PGD if the weights are chosen constant equal to
\begin{equation} \label{eq:init_weights}
    W_x^{(t)} = \frac{1}{\rho}B^\top,
    \qquad W_u^{(t)} = (\Id - \frac{1}{\rho}B^\top B)~,
    \qquad \mu^{(t)} = \frac{\lambda}{\rho},
    \qquad \text{with} ~ \rho = \|B\|_2^2~
    \enspace.
\end{equation}
In practice, this choice of parameters are used as initialization for a posterior training stage.
In many practical applications, one is interested in minimizing the loss \autoref{eq:general_problem} for a fixed $B$ and a particular distribution over the space of $x$, $\mathcal P$.
As a result, the goal of this training stage is to find parameters $\Theta^{(T)}$ that minimize the risk, or expected loss, $\mathbb E[\loss(x, \phi_{\Theta^{(T)}}(x))]$ over $\mathcal P$.
Since one does not have access to this distribution, and following an empirical risk minimization approach with a given training set $\{x_1, \dots x_N\}$ (assumed sampled \iid{} from $\mathcal P$), the network is trained by minimizing
\begin{equation}
    \label{eq:training_loss}
    \min_{\Theta^{(T)}} \frac{1}{N}\sum_{i=1}^N  \loss(x_i, \phi_{\Theta^{(T)}}(x_i))
    \enspace.
\end{equation}
Note that when $T\to +\infty$, the presented initialization in \autoref{eq:init_weights} gives a global minimizer of the loss for all $x_i$, as the network converges to exact PGD.
When $T$ is fixed, however, the output of the network is not a minimizer of \autoref{eq:general_problem} in general.
Minimizing this empirical risk can therefore find a weight configuration that reduces the sub-optimality of the network relative to \autoref{eq:general_problem} over the input distribution used to train the network.
In such a way, the network learns an algorithm to approximate the solution of \autoref{eq:general_problem} for a particular class or distributions of signals.
\changed{It is important to note here that while this procedure can accelerate the resolution the problem, the learned algorithm will only be valid for inputs $x_i$ coming from the same input distribution $\mathcal P$ as the training samples. The algorithm might not converge for samples which are too different from the training set, unlike the iterative algorithm which is guaranteed to converge for any sample.}

This network architecture design can be directly applied to TV regularized problems if the synthesis formulation \autoref{eq:tv_synthesis} is used.
Indeed, in this case PGD reduces to the ISTA algorithm, with $B = AL$ and $\prox_{\mu g} = \ST(\cdot, \mu)$ becomes simply a soft-thresholding operator (which is only applied on the coordinates $\{2,\dots k\}$, following the definition of $R$).
However, as discussed in \autoref{prop:lower_bound_AL}, the conditioning of the synthesis problem makes the estimation of the solution slow, increasing the number of network layers needed to get a good estimate of the solution.
In the next section, we will extend these learning-based ideas directly to the analysis formulation by deriving a way to obtain exact and approximate expressions for the sub-differential of the non-separable \proxTV{}.

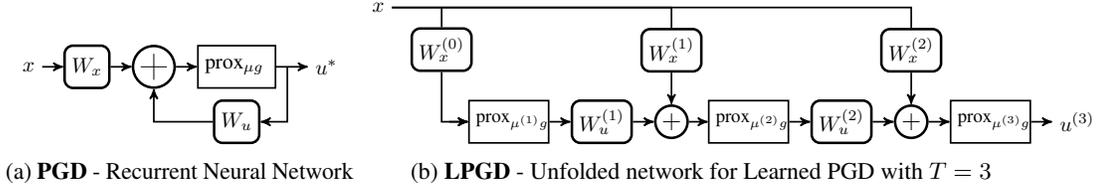
\begin{figure}[tp!]
    \begin{subfigure}[b]{.33\textwidth}
        \centering
        \scalebox{.8}{

\begin{tikzpicture}[scale=1]

\tikzset{
    >=stealth',
    varstyle/.style={
           rectangle,
           fill=white,
           rounded corners,
           draw=black, very thick,
           minimum width=2em,
           minimum height=2em,
           text centered},
    op/.style={
           circle,
           draw=black, very thick,
           fill=white,
           text width=1em,
           minimum height=1em,
           text centered},
    st/.style={
           draw,
           thick,
           rectangle, 
           fill=white,
           text width=3em, 
           minimum height=2em,},
    pil/.style={
           ->,
           thick,
    }
};

\node[varstyle] (linear) {$W_x $};
\node[op, inner sep=4pt, right=1em of linear] (add) {};
\node[left=1em of linear] (X) {$x$} edge[pil] (linear.west);
\path (linear.east) edge[pil] (add.west);


\draw ($ (add.north) - (0, .15)$) -- ($ (add.south) + (0, .15)$);
\draw ($ (add.east) - (.15, 0)$) -- ($ (add.west) + (.15, 0)$);

\node[st, right=1em of add] (h) {prox$_{\mu g}$};
\node[right=1.5em of h] (Z) {$u^*$};
\path (add.east)
    edge[pil] (h.west);
\path (h.east)
    edge[pil] (Z);
\node[inner sep=0,minimum size=0,right=.5em of h] (branch) {};
\node[varstyle,below=.5em of h] (S) {
    $W_u$
};
\draw[pil] (branch) |- (S.east);
\draw[pil] (S.west) -| (add);
\end{tikzpicture}
}
        \caption{\textbf{PGD} - Recurrent Neural Network}
        \label{fig:network_pgd}
    \end{subfigure}
    \begin{subfigure}[b]{.65\textwidth}
        \centering
        \scalebox{.8}{

\begin{tikzpicture}[scale=1]

\tikzset{
    >=stealth',
    varstyle/.style={
           rectangle,
           fill=white,
           rounded corners,
           draw=black, very thick,
           minimum width=2em,
           minimum height=2em,
           text centered},
    op/.style={
           circle,
           draw=black, very thick,
           fill=white,
           text width=.5em,
           minimum height=.3em,
           text centered},
    st/.style={
           draw,
           thick,
           rectangle, 
           fill=white,
           text width=3.1em, 
           minimum height=2em,
           text centered},
    pil/.style={
           ->,
           thick,
    }
};

\def\nlayer{2}

\node (X) {$x$};
\node[right=2em of X] (linear) {}; 
\node[below=4.7em of linear] (p) {};
\node[varstyle, above=2em of p] (B) {
        $W_x^{(0)}$
};

\foreach \i in {1,...,\nlayer}{
    \node[st, right=.9em of p] (sta\i) {
        \footnotesize prox$_{\mu^{(\i)}g}$
    };
    \path (p.east) edge[pil] (sta\i.west);

    \node[varstyle, right=1em of sta\i] (S) {
            $W_u^{(\i)}$
    };
    \node[op, inner sep=4pt, right=1em of S] (p) {};
    \node[varstyle, above=1.5em of p] (B\i) {
            $W_x^{(\i)}$
    };
    
    \draw ($ (p.north) - (0, .15)$) -- ($ (p.south) + (0, .15)$);
    \draw ($ (p.east) - (.15, 0)$) -- ($ (p.west) + (.15, 0)$);
    
    \path (sta\i.east) edge[pil] (S.west);
    \path (S.east) edge[pil] (p.west);
    \draw[pil] (X) -| (B\i) -- (p.north);
}
\draw[pil] (linear.center) -- (B) |- (sta1.west);

\node[st, right=1em of p] (sta) {
        \footnotesize prox$_{\mu^{(3)}g}$
};
    
\pgfmathtruncatemacro{\nlayer}{\nlayer + 1}
\node[right=1em of sta] (Z) {$u^{(\nlayer)}$};
\path (p.east) edge[pil] (sta.west);
\path (sta.east) edge[pil] (Z.west);

\end{tikzpicture}
}
        \caption{\textbf{LPGD} - Unfolded network for Learned PGD with $T=3$}
        \label{fig:lpgd_nn}
    \end{subfigure}
    \caption{
        \textbf{Algorithm Unrolling} - Neural network representation of iterative algorithms. The parameters $\Theta^{(t)} = \{W_x^{(t)}, W_u^{(t)}, \mu^{(t)}\}$ can be learned by minimizing the loss \autoref{eq:training_loss} to approximate good solution of \autoref{eq:general_problem} on average.}
\end{figure}


\section{Back-propagating through TV proximal operator}
\label{sec:prox_backprop}

Our two approaches to define learnable networks based on PGD for TV-regularized problems in the analysis formulation differ on the computation of the \proxTV{} and its derivatives.
Our first approach consists in directly computing the weak derivatives of the exact proximal operator while the second one uses a differentiable approximation.

\subsection{Derivative of \proxTV{}}
\label{sub:prox_jacobian}

While there is no analytic solution to the \proxTV{}, it can be computed exactly (numerically) for 1D problems using the taut-string algorithm \citep{Condat2013}.
This operator can thus be applied at each layer of the network, reproducing the architecture described in \autoref{fig:lpgd_nn}.
We define the LPGD-Taut network $\phi_{\Theta^{(T)}}(x)$ with the following recursion formula
\begin{equation}
    \phi_{\Theta^{(T)}}(x) =  \prox_{\mu^{(T)}\|\cdot\|_{TV}}
        \left(W_x^{(T)} x +  W_u^{(T)}\phi_{\Theta^{(T-1)}}(x)\right)
\end{equation}
To be able to learn the parameters through gradient descent, one needs to compute the derivatives of \autoref{eq:training_loss} \wrt{} the parameters $\Theta^{(T)}$. Denoting $h = W_x^{(t)} x +  W_u^{(t)}\phi_{\Theta^{(t-1)}}(x)$ and $u = \prox_{\mu^{(t)}\|\cdot\|_{TV}}(h)$, the application of the chain rule (as implemented efficiently by automatic differentiation) results in
\begin{equation}
    \frac{\partial \loss}{\partial h} = J_x(h, \mu^{(t)})^\top \frac{\partial \loss}{\partial u}~,
    \quad\text{ and }\quad
     \frac{\partial \loss}{\partial \mu^{(t)}} = J_\mu(h, \mu^{(t)})^\top \frac{\partial \loss}{\partial u}
     \enspace ,
\end{equation}
where $J_x(h, \mu) \in \Rset^{k\times k}$ and $J_\mu(h, \mu) \in \Rset^{k\times 1}$ denotes the weak Jacobian of the output of the proximal operator $u$ with respect to the first and second input respectively.
We now give the analytic formulation of these weak Jacobians in the following proposition.

\begin{restatable}{proposition}{jacobianProxTV}[Weak Jacobian of \proxTV{}]
    \label{prop:jacobian_prox_tv}
    Let $x\in\Rset^k$ and $u = \prox_{\mu\|\cdot\|_{TV}}(x)$, and denote by $\mathcal S$ the support of $z = \widetilde Du$.
    Then, the weak Jacobian $J_x$ and $J_\mu$ of the \proxTV{} relative to $x$ and $\mu$ can be computed as
    \begin{align*}
        J_x(x, \mu)
             = L_{:, \mathcal S}(L_{:, \mathcal S}^\top
                L_{:, \mathcal S})^{-1} L_{:, \mathcal S}^\top
    \quad \text{ and } \quad
        J_\mu(x, \mu)
            = - L_{:, \mathcal S}(L_{:, \mathcal S}^\top
              L_{:, \mathcal S})^{-1}\sign(Du)_{\mathcal S}
    \end{align*}
\end{restatable}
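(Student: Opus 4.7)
The strategy is to turn the non-smooth proximal problem into a smooth one by identifying the active set, then applying implicit differentiation. I would start by performing the same change of variable already used for the synthesis formulation: setting $z = \widetilde D u$ so that $u = Lz$, and noting that $\|Du\|_1 = \|Rz\|_1$ since $R$ exactly kills the unpenalized first coordinate. The prox-TV problem then rewrites as the partially penalized least-squares problem
\[
    \min_{z\in\Rset^k} \tfrac12\|x - Lz\|_2^2 + \mu\|Rz\|_1,
\]
with $u = Lz^*$ at optimum. This is a generalized Lasso whose active-set analysis is standard.

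Next, I would write the first-order optimality condition: there exists $s \in \partial\|Rz^*\|_1$ with $L^\top(Lz^* - x) + \mu R^\top s = 0$. Letting $\mathcal{S} = \supp(z^*)$, one has $z^*_{\mathcal{S}^c} = 0$ and $s_i = \sign(z^*_i)$ for $i \in \mathcal{S}$ (with $(R^\top s)_1 = 0$ regardless of $s_1$). Restricting the KKT system to the rows indexed by $\mathcal{S}$ gives
\[
    L_{:,\mathcal{S}}^\top L_{:,\mathcal{S}}\, z^*_{\mathcal{S}} = L_{:,\mathcal{S}}^\top x - \mu (R^\top s)_{\mathcal{S}}.
\]
Because $L$ is lower triangular with unit diagonal, $L_{:,\mathcal{S}}$ has full column rank and $L_{:,\mathcal{S}}^\top L_{:,\mathcal{S}}$ is invertible. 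Solving explicitly and using $u = L_{:,\mathcal{S}} z^*_{\mathcal{S}}$ yields the closed form
\[
    u = L_{:,\mathcal{S}}\bigl(L_{:,\mathcal{S}}^\top L_{:,\mathcal{S}}\bigr)^{-1}\!\bigl(L_{:,\mathcal{S}}^\top x - \mu (R^\top s)_{\mathcal{S}}\bigr).
\]

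I would then invoke the standard fact that for almost every $x$ the support $\mathcal{S}$ and the sign pattern $s$ are locally constant in a neighborhood of $(x,\mu)$ (strict complementarity holds on an open dense set for generalized Lasso problems). On such a neighborhood the displayed expression is smooth, and direct partial differentiation gives $J_x = L_{:,\mathcal{S}}(L_{:,\mathcal{S}}^\top L_{:,\mathcal{S}})^{-1}L_{:,\mathcal{S}}^\top$ and $J_\mu = -L_{:,\mathcal{S}}(L_{:,\mathcal{S}}^\top L_{:,\mathcal{S}})^{-1}(R^\top s)_{\mathcal{S}}$. Identifying $(R^\top s)_{\mathcal{S}}$ with $\sign(Du)_{\mathcal{S}}$ under the natural convention that the unpenalized first coordinate contributes a $0$ (since $s_i = \sign((Du)_{i-1})$ for $i \ge 2$ and $(R^\top s)_1 = 0$) produces the stated formulas.

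The main subtlety is the weak-Jacobian aspect: prox-TV is piecewise affine in $x$ rather than globally differentiable, so the derivation above only gives the classical Jacobian on the open set where the active set is locally constant. I would close the argument by noting that this set has full Lebesgue measure and that the formulas extend by selection to the exceptional locus, yielding a valid weak Jacobian in the Clarke sense -- which is what is needed for back-propagation. A small additional check is the non-degeneracy of $L_{:,\mathcal{S}}^\top L_{:,\mathcal{S}}$, which as noted follows immediately from the triangular-with-unit-diagonal structure of $L$.
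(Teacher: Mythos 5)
Your proof is correct, and it reaches the stated formulas by a genuinely different mechanism than the paper. You both start from the same synthesis change of variable $z=\widetilde D u$, the same restriction to the support $\mathcal S$, and the same invertibility of $L_{:,\mathcal S}^\top L_{:,\mathcal S}$ (which, as you note, follows from $L$ being invertible). The difference is how the linear system for the Jacobian is obtained. You solve the KKT system restricted to the active set to get an explicit local closed form, $u = L_{:,\mathcal{S}}(L_{:,\mathcal{S}}^\top L_{:,\mathcal{S}})^{-1}(L_{:,\mathcal{S}}^\top x - \mu (R^\top s)_{\mathcal{S}})$, argue that the support and sign pattern are locally constant off a null set, and differentiate the affine formula directly; the weak-Jacobian status then follows because the prox is Lipschitz and the formula holds a.e. The paper instead writes $z(x,\mu)$ as a fixed point of the ISTA iteration for the synthesis problem, applies the weak derivative of the soft-thresholding (their Lemma on $\ST$), and solves the resulting fixed-point equation for $\widehat J_{x,\mathcal S}$ and $\widehat J_{\mu,\mathcal S}$ --- which reduces to the same linear system $L_{:,\mathcal S}^\top L_{:,\mathcal S}\,\widehat J_{x,\mathcal S} = L_{:,\mathcal S}^\top$. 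Your route makes the piecewise-affine structure and the measure-theoretic justification of the weak Jacobian more transparent, and is arguably more self-contained; the paper's route mirrors what back-propagation through the unrolled iterations actually computes, which is what lets them later show (their convergence proposition for the inner network) that differentiating a finite number of ISTA layers converges to this Jacobian. Two small points in your favor: you handle the index shift between $\sign(z)_{\mathcal S}$ and $\sign(Du)_{\mathcal S}$ and the unpenalized first coordinate explicitly, which the paper's statement glosses over; conversely, note that when $z_1^*=u_1^*=0$ the index $1$ drops out of $\mathcal S=\supp(z)$ even though that coordinate is unpenalized and always free --- like the paper, you only cover this degenerate case through the a.e. argument.
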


The proof of this proposition can be found in \autoref{sub:proof:prop:jacobian_prox_tv}.
Note that the dependency in the inputs is only through $\mathcal S$ and $\sign(Du)$, where $u$ is a short-hand for $\prox_{\mu\|\cdot\|_{TV}}(x)$.
As a result, computing these weak Jacobians can be done efficiently by simply storing $\sign(Du)$ as a mask, as it would be done for a RELU or the soft-thresholding activations, and requiring just $2(k-1)$ bits.
With these expressions, it is thus possible to compute gradient relatively to all parameters in the network, and employ them via back-propagation.


\subsection{Unrolled prox-TV}
\label{sub:lista_prox_unrolling}

As an alternative to the previous approach, we propose to use the LISTA network to approximate the \proxTV{} \autoref{eq:analysis_proximal_op}.
The \proxTV{} can be reformulated with a synthesis approach resulting in a Lasso \ie{}
\begin{equation}
    \label{eq:synth_prox_tv}
    z^* = \argmin_z \frac12 \|h - Lz\|_2^2 + \mu\|Rz\|_1
\end{equation}
The proximal operator solution can then be retrieved with $\prox_{\mu\|\cdot\|_{TV}}(h) = Lz^*$.
This problem can be solved using ISTA, and approximated efficiently with a LISTA network \citet{Gregor10}.
For the resulting architecture -- dubbed LPGD-LISTA -- $\prox_{\mu\|\cdot\|_{TV}}(h)$ is replaced by a nested LISTA network with a fixed number of layers $T_{in}$ defined recursively with $z^{(0)} = Dh$ and
\begin{equation}
   z^{(\ell+1)} =
        \ST\left(W_z^{(\ell, t)}z^{(\ell)}  + W_h^{(\ell, t)}\Phi_{\Theta^{(t)}},
                 \frac{\mu^{(\ell, t)}}{\rho}\right)
        \enspace .
\end{equation}
Here, $W_z^{(\ell, t)}, W_h^{(\ell, t)}, \mu^{(\ell, t)}$ are the weights of the nested LISTA network for layer $\ell$.
They are initialized with weights chosen as in \autoref{eq:init_weights} to ensure that the initial state approximates the \proxTV{}.
Note that the weigths of each of these inner layers are also learned through back-propagation during training.

The choice of this architecture provides a differentiable (approximate) proximal operator.
Indeed, the LISTA network is composed only of linear and soft-thresholding layers -- standard tools for deep-learning libraries.
The gradient of the network's parameters can thus be computed using classic automatic differentiation.
Moreover, if the inner network is not trained, the gradient computed with this method will converge toward the gradient computed using \autoref{prop:jacobian_prox_tv} as $T_{in}$ goes to $\infty$ (see \autoref{prop:cvg_jacobian}).
Thus, in this untrained setting with infinitely many inner layers, the network is equivalent to LPGD-Taut as the output of the layer also converges toward the exact proximal operator.


\paragraph{Connections to inexact PGD}
\label{sec:inexact_pgd}

A drawback of approximating the \proxTV{} via an iterative procedure is, precisely, that it is not exact.
This optimization error results from a trade-off between computational cost and convergence rate.
Using results from \citet{Machart2012}, one can compute the scaling of $T$ and $T_{in}$ to reach an error level of $\delta$ with an untrained network. \autoref{prop:T_in_lower_bound} shows that without learning, $T$ should scale as $O(\frac{1}{t})$ and $T_{in}$ should be larger than $O(\ln(\frac{1}{\delta}))$. This scaling gives potential guidelines to set these parameters, as one can expect that learning the parameters of the network would reduce these requirement.




\section{Experiments}
\label{sec:exp}


All experiments are performed in Python using \texttt{PyTorch} \citep{Paszke2019}.
We used the implementation\footnote{Available at \url{https://github.com/albarji/proxTV}} of \citet{Barbero2018} to compute TV proximal operator using taut-string algorithm.
The code to reproduce the figures is available online\footnote{Available at \url{https://github.com/hcherkaoui/carpet}.}.

In all experiments, we initialize $u_0 = A^\dagger x$.
Moreover, we employed a normalized $\lambda_{reg}$ as a penalty parameter: we first compute the value of $\lambda_{\max}$ (which is the minimal value for which $z=0$ is solution of \autoref{eq:tv_synthesis}) and we refer to $\lambda$ as the ratio so that $\lambda_{reg} = \lambda \lambda_{\max}$, with $\lambda \in [0, 1]$ (see \autoref{sub:lambda_max}).
As the computational complexity of all compared algorithms is the same except for the proximal operator, we compare them in term of iterations.

\subsection{Simulation}
\label{sub:simu}

We generate $n=2000$ times series and used half for training and other half for testing and comparing the different algorithms. \changed{We train all the network's parameters jointly -- those to approximate the gradient for each iteration along with those to define the inner proximal operator. The full training process is described in \autoref{sec:training_process}.}
We set the length of the source signals $(u_i)_{i=1}^{n} \in \Rset^{ n \times k}$ to $k=8$ with a support of $|S|=2$ non-zero coefficients (larger dimensions will be showcased in the real data application).
We generate $A \in \Rset^{m \times k}$ as a Gaussian matrix with $m=5$, obtaining then $(u_i)_{i=1}^{n} \in \Rset^{ n \times p}$. Moreover, we add Gaussian noise to measurements $x_i = Au_i$ with a signal to noise ratio (SNR) of $1.0$.

We compare our proposed methods, LPGD-Taut network and the LPGD-LISTA with $T_{in} = 50$ inner layers to PGD and Accelerated PGD with the analysis formulation. For completeness, we also add the FISTA algorithm for the synthesis formulation in order to illustrate \autoref{prop:lower_bound_AL} along with its learned version.

\begin{figure}[t]
    \centering
    \includegraphics[width=\columnwidth]{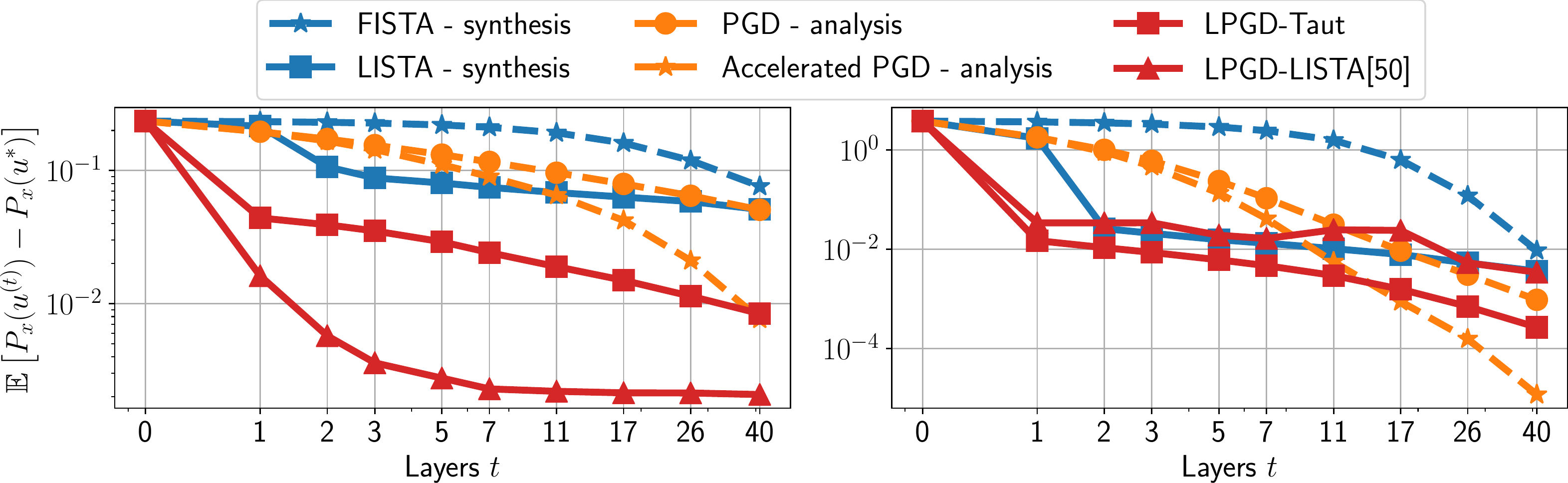}
    \caption{\textbf{Performance comparison} for different regularisation levels (\emph{left}) $\lambda = 0.1$, (\emph{right}) $\lambda = 0.8$. We see that synthesis formulations are outperformed by the analysis counter part. Both our methods are able to accelerate the resolution of \autoref{eq:analysis_formulation_primal}, at least in the first iterations.}
    \label{fig:loss_comparison}
\end{figure}

\autoref{fig:loss_comparison} presents the risk (or expected function value, $P$) of each algorithm as a function of the number of layers or, equivalently, iterations.
For the learned algorithms, the curves in $t$ display the performances of a network with $t$ layer trained specifically.
We observe that all the synthesis formulation algorithms are slower than their analysis counterparts, empirically validating \autoref{prop:lower_bound_AL}.
Moreover, both of the proposed methods accelerate the resolution of \autoref{eq:analysis_formulation_primal} in a low iteration regime.
However, when the regularization parameter is high ($\lambda = 0.8$), we observe that the performance of the LPGD-LISTA tends to plateau.
It is possible that such a high level of sparsity require more than $50$ layers for the inner network (which computes the \proxTV{}).
According to \autoref{sec:inexact_pgd}, the error associated with this proximity step hinders the global convergence, making the loss function decrease slowly.
Increasing the number of inner layers would alleviate this issue, though at the expense of increased computational burden for both training and runtime.
For LPGD-Taut, while the Taut-string algorithm ensures that the recovered support is exact for the proximal step, the overall support can be badly estimated in the first iterations.
This can lead to un-informative gradients as they greatly depend on the support of the solution in this case, and explain the reduced performances of the network in the high sparsity setting.

\paragraph{Inexact \proxTV{}}

With the same data $(x_i)_{i=1}^{n} \in \Rset^{ n \times m}$, we empirically investigate the error of the \proxTV{} $\epsilon_k^{(t)} = F_{u^{(t)}}(z^{(t)}) - F_{u^{(t)}}(z^*)$ and evaluate it for c with different number of layers ($T \in [20, 50]$).
We also investigate the case where the parameter of the nested LISTA in LPGD-LISTA are trained compared to their initialization in untrained version.

\begin{figure}[t]
    \centering
    \includegraphics[width=\columnwidth]{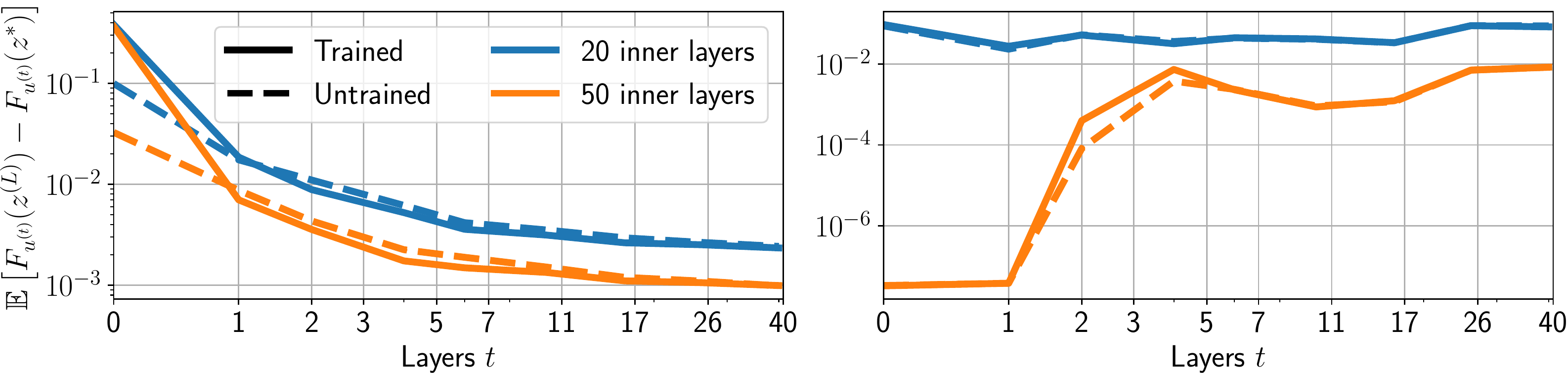}
    \caption{\textbf{Proximal operator error comparison} for different regularisation levels (\emph{left}) $\lambda = 0.1$, (\emph{right}) $\lambda = 0.8$. We see that learn the trained unrolled prox-TV barely improve the performance. More interestingly, in a high sparsity context, after a certain point, the error sharply increase.}
    \label{fig:comparison_prox_tv_loss}
\end{figure}

\autoref{fig:comparison_prox_tv_loss} depicts the error $\epsilon_k$ for each layer. We see that learning the parameters of the unrolled \proxTV{} in LPGD-LISTA barely improves the performance. More interestingly, we observe that in a high sparsity setting the error sharply increases after a certain number of layers. This is likely cause by the high sparsity of the estimates, the small numbers of iterations of the inner network (between 20 and 50) are insufficient to obtain an accurate solution to the proximal operator. This is in accordance with inexact PGD theory which predict that such algorithm has no exact convergence guarantees \citep{Schmidt2011}.

\subsection{fMRI data deconvolution}
\label{sub:expe:fmri}

Functional magnetic resonance imaging (fMRI) is a non-invasive method for recording the brain activity by dynamically measuring  blood oxygenation level-dependent (BOLD) contrast, denoted here $x$.
The latter reflects the local changes in the deoxyhemoglobin concentration in the brain \citet{Ogawa1992} and thus indirectly measures neural activity through the neurovascular coupling.
This coupling is usually modelled as a linear and time-invariant system and characterized by its impulse response, the so-called haemodynamic response function (HRF), denoted here $h$.
Recent developments propose to estimate either the neural activity signal independently \citep{Fikret2013, Cherkaoui2019a} or jointly with the HRF \citep{Cherkaoui2019, Farouj2019}.
Estimating the neural activity signal with a fixed HRF is akin to a deconvolution problem regularized with TV-norm,
\begin{equation}
    \label{eq:analysis_conv}
    \min_{u \in \Rset^{k}} ~ P(u) = \frac 12 \| h * u - x \|_2^2 + \lambda \| u \|_{TV}
\end{equation}
To demonstrate the usefulness of our approach \changed{with real data, where the training set has not the exact same distribution than the testing set}, we compare the LPGD-Taut to Accelerated PGD for the analysis formulation on this deconvolution problem. We choose two subjects from the UK Bio Bank (UKBB) dataset \citep{ukbb2015}, perform the usual fMRI processing and reduce the dimension of the problem to retain only $8000$ time-series of $250$ time-frames, corresponding to a record of 3 minute 03 seconds. The full preprocessing pipeline is described in \autoref{sub:fmri_prepro}. We train the LPGD taut-string network solver on the first subject and \autoref{fig:bold_deconv_loss_comp} reports the performance of the two algorithms on the second subject for $\lambda = 0.1$.
\changed{The performance is reported relatively to the number of iteration as the computational complexity of each iteration or layer for both methods is equivalent.}
It is clear that LPGD-Taut converges faster than the Accelerated PGD even on real data.
In particular, acceleration is higher when the regularization parameter $\lambda$ is smaller.
\changed{As mentioned} previously, this acceleration is likely to be caused by the better learning capacity of the network in a low sparsity context.
The same experiment is repeated for $\lambda = 0.8$ in \autoref{fig:bold_deconv_loss_comp_0_8}.
\begin{figure}[hbtp!]
    \begin{minipage}{.5\textwidth}
        \includegraphics[width=\textwidth]{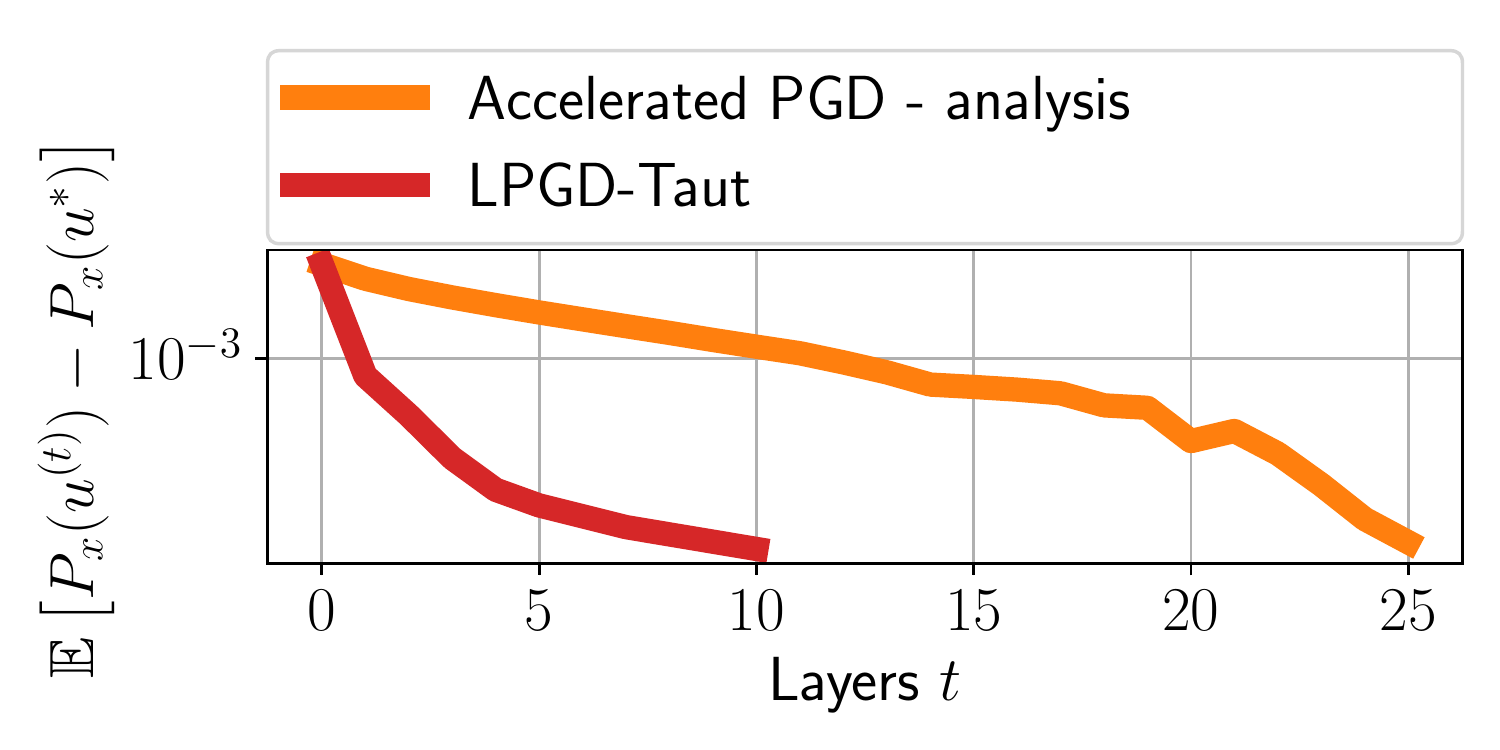}
    \end{minipage}%
    \begin{minipage}{.45\textwidth}
    	\caption{\textbf{Performance comparison} ($\lambda = 0.1$) between our analytic prox-TV derivative method and the PGD in the analysis formulation for the HRF deconvolution problem with fMRI data. Our proposed method outperform the FISTA algorithm in the analysis formulation.}
      \label{fig:bold_deconv_loss_comp}
    \end{minipage}%
\end{figure}
\vskip-.5em
\section{Conclusion}
\label{sec:ccl}

This paper studies the optimization of TV-regularized problems via learned PGD.
We demonstrated, both analytically and numerically, that it is better to address these problems in their original analysis formulation rather than resort to the simpler (alas slower) synthesis version.
We then proposed two different algorithms that allow for the efficient computation and derivation of the required \proxTV{}, exactly or approximately. Our experiments on synthetic and real data demonstrate that our learned networks for \proxTV{} provide a significant advantage in convergence speed.

Finally, we believe that the principles presented in this paper could be generalized and deployed in other optimization problems, involving not just the TV-norm but more general analysis-type priors. In particular, this paper only apply for 1D TV problems because the equivalence between Lasso and TV is not exact in higher dimension. In this case, we believe exploiting a dual formulation \citep{Chambolle2004} for the problem could allow us to derive similar learnable algorithms.

\section*{Broader Impact}

This work attempts to shed some understanding into empirical phenomena in signal processing -- in our case, piecewise constant approximations. As such, it is our hope that this work encourages fellow researchers to invest in the study and development of principled machine learning tools. Besides these, we do not foresee any other immediate societal consequences.



\section*{Acknowledgement}

\changed{
    We gratefully acknowledge discussions with Pierre Ablin, whose suggestions helped us completing some parts of the proofs.
    H. Cherkaoui is supported by a CEA PhD scholarship. J. Sulam is partially supported by NSF Grant 2007649.
}
\bibliographystyle{abbrvnat}
\bibliography{learn_tv}

\newpage
\appendix

\counterwithin{figure}{section}

\changed{
\section{Network training process strategy}
\label{sec:training_process}

    Here, we give a more detailed description of the training procedure used in \autoref{sec:exp}.

    \paragraph{Optimization algorithm for training}
    In our experiments, all networks are trained using Gradient Descent (GD) with back-tracking line search. The gradients are computed using automatic differentiation in Pytorch~\citep{Paszke2019} for most layers and the weak Jacobian proposed in \autoref{sub:prox_jacobian} for the back-propagation through the \proxTV{}. The learning is stopped once a step-size of $\eta_{limit} = 10^{-20}$ is reached in the back-tracking step. For LPGD-LISTA, the weights of the inner LISTA computing the \proxTV{} are trained jointly with the parameters of the outer unrolled PGD.

    \paragraph{Weight initialization}
    All layers for an unrolled algorithm are initialized using the values of weights in \autoref{eq:init_weights} that ensure the output of the layer with $T$ layers corresponds to the output of $T$ iterations of the original algorithm. To further stabilize the training, we use a layer-wise approach. When training a network with $T_1 + T_2$ layers after having trained a network with $T_1$ layers, the first $T_1$ layers in the new network are initialized with the weights of the one trained previously, and the remaining layers are initialized using weights value from \autoref{eq:init_weights}. This ensures that the initial value of the loss for the new network is smaller than the one from the shallower one if the unrolled algorithm is monotonous (as it is the case for PGD).

}

\section{Real fMRI data acquisition parameters and preprocessing strategy}
\label{sub:fmri_prepro}

In this section, we complete the description of the resting-state fMRI (rs-fMRI) data used for the experiment of Fig.~\ref{fig:bold_deconv_loss_comp}. For this experiment, we investigate the 6 min long rs-fMRI acquisition (TR=0.735~s) from the UK Bio Bank dataset \citep{ukbb2015}. The following pre-processing steps were applied on the images: motion correction, grand-mean intensity normalisation, high-pass temporal filtering, Echo planar imaging unwarping, Gradient Distortion Correction unwarping and structured artefacts removal by Independant Components Analysis. \changed{More details on the processing pipeline} can found in \citet{Alfaro-Almagro2018}.

On top of this preprocessing, we perform a standard fMRI preprocessing proposed in the python package Nilearn\footnote{\texttt{https://nilearn.github.io}}. This standard pipeline includes to detrend the data, standardize it and filter high and low frequencies to reduce the presence of noise.

\section{Real fMRI data experiment addition results}

Here, we provide an extra experiment for \autoref{sub:expe:fmri} with $\lambda = 0.8\lambda_{\max}$ and recall the previous one with $\lambda = 0.1\lambda_{\max}$ to help performance comparison in different regularization regime.

\begin{figure}[hbtp!]
    \begin{subfigure}[b]{.5\textwidth}
        \includegraphics[width=\textwidth]{bold_deconv_loss_comp_0_1.pdf}
        \caption{$\lambda = 0.1\lambda_{\max}$}
    \end{subfigure}%
    \begin{subfigure}[b]{.5\textwidth}
        \includegraphics[width=\textwidth]{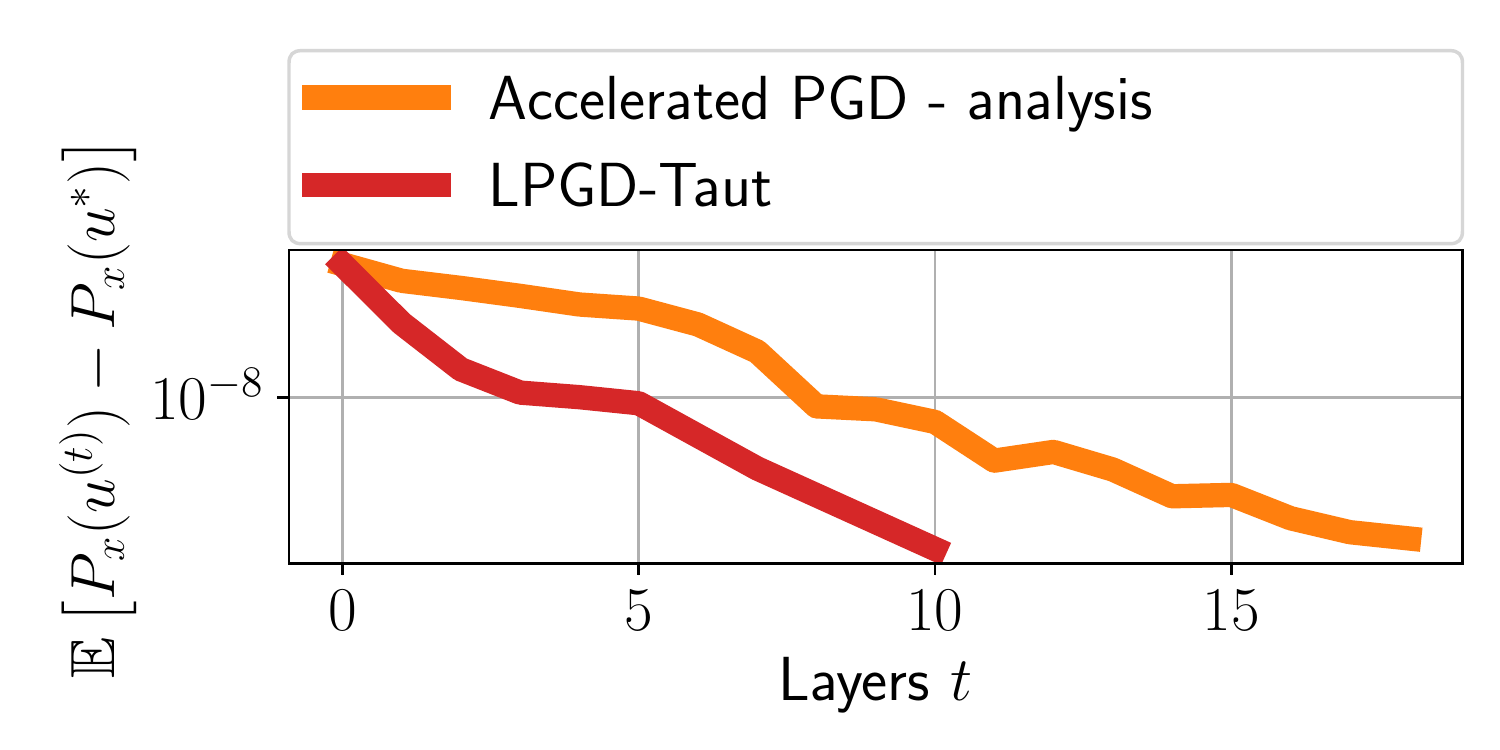}
        \caption{$\lambda = 0.8\lambda_{\max}$}
    \end{subfigure}
    	\caption{\textbf{Performance comparison} between LPGD-Taut and iterative PGD for the analysis formulation for the HRF deconvolution problem with fMRI data. Our proposed method outperform the FISTA algorithm in the analysis formulation. We notice a slight degradation of the acceleration in this high sparsity context.}
      \label{fig:bold_deconv_loss_comp_0_8}
\end{figure}

We can see that the performance drop in the higher sparsity setting compared to the performances with $\lambda = 0.1\lambda_{\max}$ but LPGD-Taut still outperforms iterative algorithms for this task on real data.

\section{Computing \texorpdfstring{$\lambda_{\max}$}{lambda max} for the TV regularized problem}
\label{sub:lambda_max}

The definition of $\lambda_{\max}$ is the smallest value for the regularisation parameter $\lambda$ such that the solution of the $TV$-regularized problem is constant. This corresponds to the definition of $\lambda_{\max}$ in the Lasso, which is the smallest regularisation parameter such that $0$ is solution. We here derive its analytic value which is used to rescale all experiments. This is important to define an equiregularisation set for the training and testing samples, to have a coherent and generalizable training.

\begin{proposition}
The value of $\lambda_{\max}$ for the TV-regularized problem is
$$
    \lambda_{\max} = \|A^\top(Ac\mathbf{1} - x)\|_\infty
$$
where $c = \displaystyle\frac{\sum_{i=1}^p S_i x_i}{\sum_{i=1}^p S_i^2}$ and $S_i = \sum_{j=1}^k A_{i,j}$.
\end{proposition}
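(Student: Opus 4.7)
The plan is to derive $\lambda_{\max}$ from the first-order optimality conditions for the convex composite problem \autoref{eq:tv_analysis}, specialized to a constant candidate minimizer $u = c\mathbf{1}$. By Fermat's rule, $c\mathbf{1}$ is a minimizer iff $0 \in \partial P(c\mathbf{1})$. The data-fit term is smooth with gradient $A^\top(Au - x)$, while the TV term is the composition of $\|\cdot\|_1$ with the linear map $D$. Using $Dc\mathbf{1} = 0$ and $\partial\|\cdot\|_1(0) = \{v \in \Rset^{k-1} : \|v\|_\infty \le 1\}$, the subdifferential of the TV term at $c\mathbf{1}$ reduces to $\{D^\top v : \|v\|_\infty \le 1\}$. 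The optimality condition therefore becomes: there exists $v$ with $\|v\|_\infty \le 1$ such that $A^\top(cA\mathbf{1} - x) = -\lambda D^\top v$, and $\lambda_{\max}$ is defined as the smallest $\lambda$ for which such a $v$ exists.

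From here two reductions are immediate. Because $D\mathbf{1} = 0$, the range of $D^\top$ lies in $\mathbf{1}^\perp$; hence a necessary condition on $c$ is $(A\mathbf{1})^\top(cA\mathbf{1} - x) = 0$. Writing $S \peq A\mathbf{1}$ so that $S_i = \sum_j A_{i,j}$, this is a scalar equation in $c$ whose unique solution is $c = \sum_i S_i x_i / \sum_i S_i^2$, matching the claim. With $c$ fixed, set $w \peq A^\top(cA\mathbf{1} - x) \in \mathbf{1}^\perp$; since $D^\top$ has trivial kernel, the certificate $v$ satisfying $\lambda D^\top v = -w$ is unique for each $\lambda > 0$, and $\lambda_{\max}$ is the smallest $\lambda$ that makes $\|v\|_\infty \le 1$. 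Reading off this critical $\lambda$ from the explicit inversion of $D^\top$ and comparing with $\|w\|_\infty$ is what yields the stated identity.

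The main obstacle is this final step. Inverting the bidiagonal operator $D^\top$ under the orthogonality constraint $\mathbf{1}^\top w = 0$ gives a certificate $v$ whose coordinates are (telescoping) partial sums of $w$. Some care is therefore needed to reconcile $\|v\|_\infty \le 1$ with the claimed $\ell_\infty$-norm of $w$ itself rather than a norm of its partial sums, and to check that the candidate $c\mathbf{1}$ is genuinely \emph{optimal} (not merely stationary) for all $\lambda \ge \lambda_{\max}$ -- the latter following from convexity of $P$ combined with attainment of the subgradient inclusion. The remainder of the argument -- Fermat's rule, the subdifferential chain rule for $\|D\cdot\|_1$, and the derivation of $c$ -- is essentially mechanical once the problem has been cast in this dual-certificate form.
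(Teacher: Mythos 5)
Your approach is the right one and is in fact more careful than the paper's own proof. Both arguments start from stationarity of the constant candidate $c\mathbf{1}$, and your derivation of $c$ from the range condition $\mathbf{1}^\top w = 0$ with $w \peq A^\top(Ac\mathbf{1}-x)$ is equivalent to the paper's (which instead minimizes $\frac12\|x-cA\mathbf{1}\|_2^2$ over $c$). The divergence is exactly at the step you flag as the main obstacle, and that obstacle is genuine, not a detail to be smoothed over. With the correct chain rule, $\partial(\lambda\|D\cdot\|_1)(c\mathbf{1}) = \lambda D^\top\{v : \|v\|_\infty\le 1\}$, and since $D^\top$ is injective the certificate is unique, with $v_j = \frac{1}{\lambda}\sum_{i=1}^{j}w_i$ (up to sign convention); the optimality condition is therefore $\max_{1\le j\le k-1}\bigl|\sum_{i=1}^{j}w_i\bigr|\le\lambda$, so the threshold is the largest absolute \emph{partial sum} of $w$, equivalently $\|L^\top w\|_\infty$ --- which is also what the Lasso $\lambda_{\max}$ of the synthesis formulation \autoref{eq:tv_synthesis} gives, since the penalized correlations there are $(L^\top A^\top(ALz^{0}-x))_j$. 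This does not equal $\|w\|_\infty$ in general: among vectors with $\sum_i w_i = 0$, $w=(1,-2,1)$ has maximal partial sum $1$ but $\|w\|_\infty = 2$, while $w=(1,1,-1,-1)$ has $\|w\|_\infty=1$ but maximal partial sum $2$. So your setup, carried to completion, does not yield the stated identity; it yields $\lambda_{\max}=\max_{1\le j\le k-1}\bigl|\sum_{i=1}^{j}(A^\top(Ac\mathbf{1}-x))_i\bigr|$.

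The paper's proof reaches $\|w\|_\infty$ only by writing $A_j^\top(Ac\mathbf{1}-x)\in\lambda\,\partial(\|Dc\mathbf{1}\|_1)_j=[-\lambda,\lambda]$, i.e.\ by treating the subdifferential of the composition $\|D\cdot\|_1$ at a point where $Du=0$ as the entrywise box $[-\lambda,\lambda]^k$. That is the subdifferential of $\lambda\|\cdot\|_1$ itself, not of $\lambda\|D\cdot\|_1$; the omitted $D^\top$ is precisely what converts the box constraint into the partial-sum constraint. You should therefore not try to force your argument to land on $\|w\|_\infty$. Complete it as you outlined --- Fermat's rule, the exact subdifferential, uniqueness of the certificate, and convexity to upgrade stationarity to global optimality for every $\lambda\ge\lambda_{\max}$ --- and you obtain a correct proof of the corrected statement.
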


\begin{proof}
We first derive the constant solution of the $\ell_2$-regression problem associated to \autoref{eq:tv_analysis}.
For $c\in\Rset$, we consider a constant vector $c \mathbf{1}$.
The best constant solution for the $\ell_2$-regression problem is obtained by solving
\begin{equation}
    \min_{c \in \Rset}f_x(c) = \frac12\|x - cA\mathbf{1}\|_2^2
    \enspace.
\end{equation}
The first order optimality condition in c reads
\begin{equation}
    \nabla f_x(c)
      = \sum_{i=1}^n (\sum_{j=1}^k A_{i,j})(c\sum_{j=1}^k A_{i,j} - x_i)
      = \sum_{i=1}^n S_i(c S_i - x_i)
      = 0
     \enspace ,
\end{equation}
and thus $c = \displaystyle\frac{\sum_{i=1}^p S_i x_i}{\sum_{i=1}^p S_i^2}$.

Then, we look at the conditions on $\lambda$ to ensure that the constant solution $c\mathbf{1}$ is solution of the regularized problem. The first order conditions on the regularized problem reads

\begin{equation}
    0 \in \partial P_x(c\mathbf{1}) = A^\top (Ac\mathbf{1} - x) + \lambda \partial \| D c\mathbf{1} \|_1
\end{equation}

Next, we develop the previous equality:
\begin{equation}
    \forall j\in \{2, \dots k\},\quad A_j^\top (Ac\mathbf{1} - x) \in \lambda \partial (\| D c\mathbf{1} \|_1)_j = [-\lambda, \lambda]
    \quad \text{since} ~ Dc\mathbf{1} = 0
\end{equation}

Thus, the constrains are all satisfied for $\lambda \ge \lambda_{\max}$, with  $\lambda_{max} = \|A^\top(Ac\mathbf{1} - x)\|_\infty$ and as $c$ is solution for the unregularized problem reduced to a constant, $c\mathbf{1}$ is solution of the TV-regularized problem for all $\lambda \ge \lambda_{\max}$.

\end{proof}

\section{Dual formulation}

In this work, we devote our effort in the analysis formulation depicted in \autoref{eq:tv_analysis}. In this section, we propose to investigate the dual formulation corresponding to \autoref{eq:tv_analysis} in order to rationalize our choice to focus on approaches that solve the \proxTV{} with an iterative method.

\paragraph{Dual derivation}

First, we derive the dual of the analysis formulation for the \proxTV{}.

\begin{restatable}{proposition}{DualDerivation}[Dual re-parametrization for the analysis formulation TV problem \autoref{eq:tv_analysis}]
    \label{prop:dual_derivation}

    Considering the primal analysis problem (with operator and variables defined as previously)

    \begin{equation}
        \label{eq:analysis_formulation_primal}
        P_x(u) = \frac{1}{2} \| x - A u \|_2^2 + \lambda \| D u \|_1
    \end{equation}

    Then,
    the dual formulation reads:
    \begin{align}
        \label{eq:explicit_dual_problem}
         & p = - \min_v ~ \frac{1}{2} \| {A^{\dagger}}^\top D^\top v \|_2^2 - v^\top D  A^{\dagger} x \\
         & \st ~ \|v\|_{\infty} \leq \lambda
    \end{align}

\end{restatable}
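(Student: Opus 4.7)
The natural route is Fenchel--Rockafellar (equivalently, variable splitting plus Lagrangian duality). The plan is to introduce an auxiliary variable $z = Du$ so that the primal becomes
\[
    \min_{u,z} \frac12\|x - Au\|_2^2 + \lambda\|z\|_1 \quad \text{s.t.} \quad z = Du,
\]
and then dualize the linear constraint with a multiplier $v \in \Rset^{k-1}$. First I would form the Lagrangian
\[
    \mathcal{L}(u,z,v) = \frac12\|x - Au\|_2^2 + \lambda\|z\|_1 + v^\top(Du - z),
\]
and perform the two inner minimizations separately.

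The minimization in $z$ is classical: $\inf_z \lambda\|z\|_1 - v^\top z$ is the Fenchel conjugate of $\lambda\|\cdot\|_1$, which equals $0$ when $\|v\|_\infty \le \lambda$ and $-\infty$ otherwise. This produces the box constraint appearing in the statement.

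The minimization in $u$ is a quadratic whose stationarity condition reads $A^\top A u = A^\top x - D^\top v$. I would solve it using the pseudoinverse identities $A^\dagger = (A^\top A)^\dagger A^\top$ and $(A^\top A)^\dagger = A^\dagger (A^\dagger)^\top$, giving the minimizer $u^\star = A^\dagger x - (A^\top A)^\dagger D^\top v$ (assuming $D^\top v \in \operatorname{Range}(A^\top)$; otherwise the inf is $-\infty$, which can be absorbed into the feasible set or treated as the natural qualification). Substituting back and using the orthogonality $A^\dagger(I - AA^\dagger)=0$, the cross term vanishes and the expression collapses to
\[
    -\tfrac12 \|(A^\dagger)^\top D^\top v\|_2^2 + v^\top D A^\dagger x + \tfrac12\|(I - AA^\dagger)x\|_2^2.
\]
Taking the supremum over $v$ with $\|v\|_\infty \le \lambda$ and rewriting the maximization as the negative of a minimization yields the claimed form, up to the additive constant $\tfrac12\|(I-AA^\dagger)x\|_2^2$ which is independent of $v$ (in the simulation regime $m \le k$ with $A$ of full row rank this constant vanishes since $AA^\dagger = I_m$; in general it can be dropped from the \emph{argument} of the dual problem, which is what the statement requires). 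Strong duality holds because the primal is convex and the constraint $z = Du$ is affine, so Slater's condition is automatic, justifying the equality of primal and dual optimal values.

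The main obstacle is bookkeeping with pseudoinverses: since $A$ need not be square, I have to justify the computation of the $u$-minimization when $A^\top A$ is singular, and verify that the orthogonality relation $A^\dagger(I - AA^\dagger)=0$ kills the cross term regardless of the shape of $A$. Once these pseudoinverse identities are in hand, the rest is routine algebra.
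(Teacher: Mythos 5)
Your proposal is correct and follows essentially the same route as the paper: Fenchel--Rockafellar duality applied to the splitting $f(u)+g(Du)$ with $f(u)=\frac12\|x-Au\|_2^2$ and $g=\lambda\|\cdot\|_1$, where the conjugate $g^*$ yields the box constraint $\|v\|_\infty\le\lambda$ and the conjugate of the quadratic yields the dual objective in $(A^\dagger)^\top D^\top v$. If anything, your treatment is more careful than the paper's, since you track the additive constant $\frac12\|(I-AA^\dagger)x\|_2^2$ and the implicit range condition $D^\top v\in\operatorname{Range}(A^\top)$ that arise when $A^\top A$ is singular, both of which the paper's computation of $f^*$ silently drops.
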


\begin{proof}

Defining, $f$ and $g$, such as  $f(u) = \frac{1}{2} \| x - A u \|_2^2$ and $g(u) = \lambda \| u \|_1$ and by denoting $p$ the minimum of \autoref{eq:analysis_formulation_primal} \wrt{} $u$, the problem reads:

\begin{equation}
    p = \min_u \quad f(u) + g(Du)
\end{equation}

With the Fenchel-Rockafellar duality theorem, we derive the dual re-parametrization:

\begin{equation}
    \label{eq:dual_problem}
    p = - \min_v ~ f^*(-D^\top v) + g^*(v)
\end{equation}

Note, that in this case we have the equality with $p$ since the problem \autoref{eq:tv_analysis} is $\mu$-strongly convex, with $\mu = \frac 12$.

We have $g^*(v) = -\min_u ~ g(u) - v^\top u$. With a
component-wise minimization, we obtain $g^*(v)_i = \delta_{|v_i| \leq \lambda}$ with $\delta$ being the convex indicator. Thus, we deduce that
$g^*(v) = \delta_{\|v\|_{\infty} \leq \lambda}$.

Then, we have $f^*(v) = -\min_u ~ f(u) - v^\top u$. By
cancelling the gradient we obtain: $f^*(v) = \frac{1}{2} \| {A^\dagger}^\top v \|_2^2 + v^\top A^{\dagger} x$

This allows use to conclude the demonstration. Note that, if we set $A = \Id$, we obtain the same problem as \citep{Barbero2018, Chambolle2004}.

\end{proof}

\paragraph{Performance comparison}

We propose to compare the performance of different iterative solvers to assess their performance.

We generate $n=1000$ times series to compare the performance between the different algorithms. We set the length of the source signals $(u_i)_{i=1}^{n} \in \Rset^{ n \times k}$ to $k=40$ with a support of $|S|=4$ non-zero coefficients.
We generate $A \in \Rset^{m \times k}$ as a Gaussian matrix with $m=40$, obtaining then $(u_i)_{i=1}^{n} \in \Rset^{ n \times p}$. Moreover, we add Gaussian noise to measurements $x_i = Au_i$ with a signal to noise ratio (SNR) of $1.0$.

We select the PGD and its accelerated version with the synthesis primal formulation \autoref{eq:tv_synthesis} (``Synthesis primal A/PGD``), the PGD and its accelerated version with the analysis primal formulation (``Analysis primal A/PGD``). We consider also the PGD and its accelerated version \citep{Chambolle2004}, for the analysis dual formulation (``Analysis dual A/PGD``) and finally we add the primal/dual algorithm \citep{Condat2013a} for the analysis primal formulation (``Analysis primal dual GD``).

\begin{figure}[hbtp!]
    \begin{subfigure}[b]{.5\textwidth}
        \includegraphics[width=\textwidth]{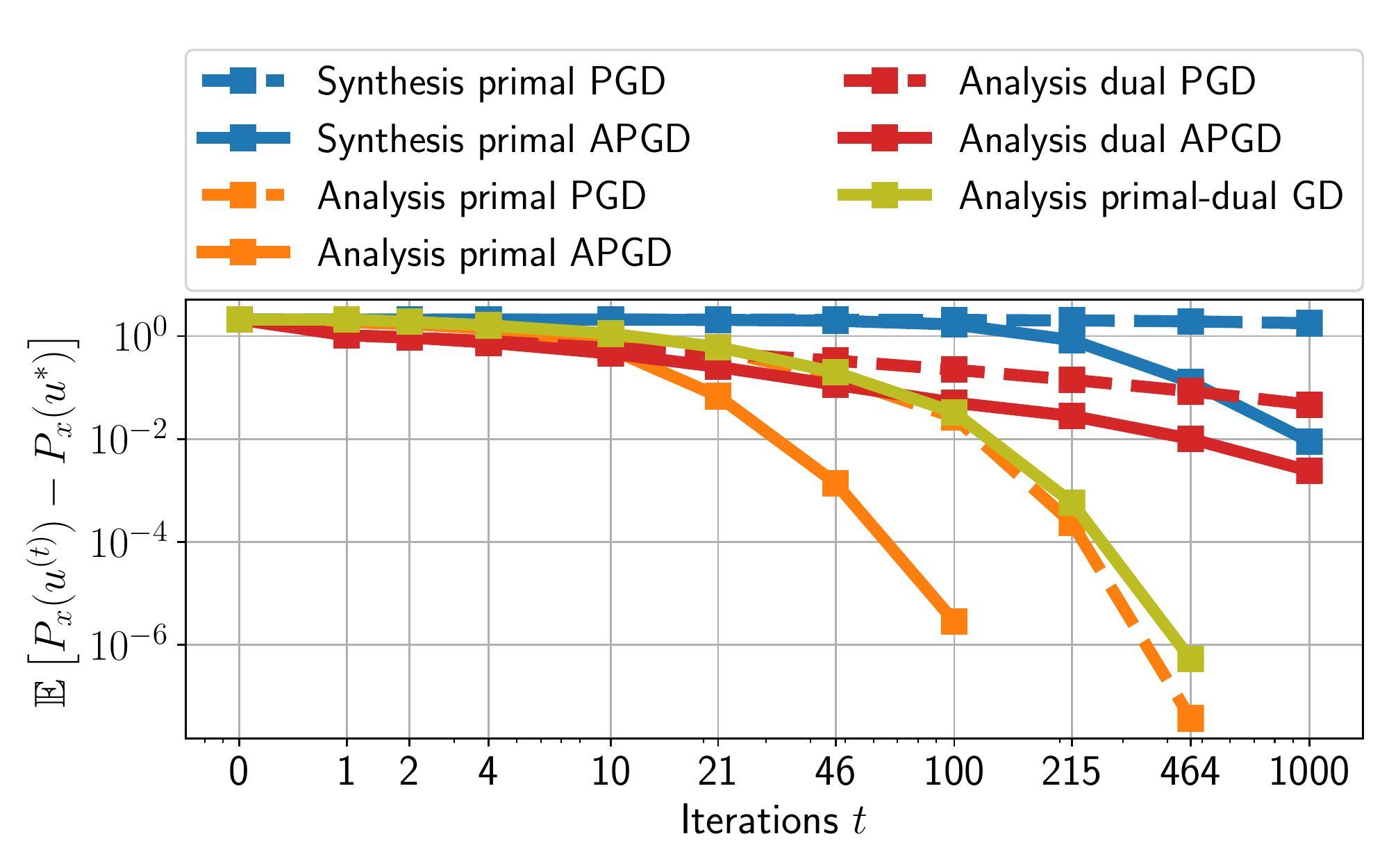}
        \caption{$\lambda = 0.1\lambda_{\max}$}
    \end{subfigure}%
    \begin{subfigure}[b]{.5\textwidth}
        \includegraphics[width=\textwidth]{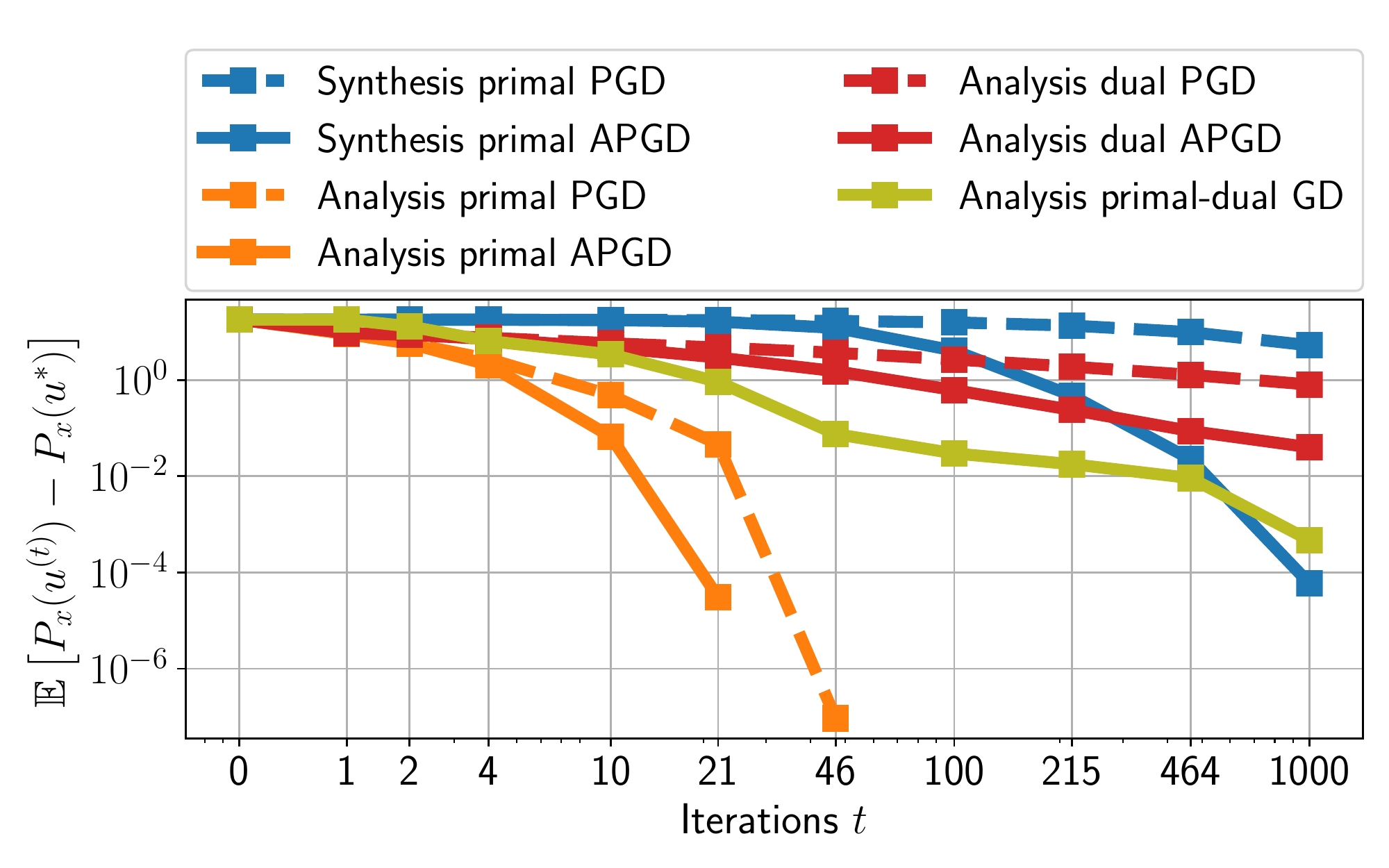}
        \caption{$\lambda = 0.8\lambda_{\max}$}
    \end{subfigure}
    	\caption{\textbf{Performance comparison} between the iterative solver for the synthesis and analysis formulation with the corresponding primal, dual or primal-dual re-parametrization. We notice that the primal analysis formulation provides the best performance in term of iterations. We also observe that the higher the regularization parameter, the faster the performance for each algorithm.}
      \label{fig:iter_loss_comp}
\end{figure}

\autoref{fig:iter_loss_comp} a proposes performance comparison for an exhaustive selection of the algorithm used to solve \autoref{eq:tv_analysis}. We see that the analysis primal formulation proposes the best performance for each regularization parameter. We notice that the \citet{Condat2013a} provides good performance too. Finally, the synthesis primal formulation along with the analysis dual formulation provides the slowest performance.
Those results reinforces our choice to focus on the PGD of the analysis primal formulation.

\section{Proof for \autoref{sec:iterative_algorithm}}

\subsection{Convergence rate of PGD for the synthesis
            formulation \texorpdfstring{\autoref{eq:cvg_rate_synthesis}}{}}
\label{sub:proof:cvg_rate_synthesis}

\begin{proof}
    The convergence rate of ISTA for the synthesis formulation reads
    \begin{equation}
        S(z^{(t)}) - S(z^*) \le \frac{\widetilde \rho}{2t}\|z^{(0)} - z^*\|_2^2
        \enspace .
    \end{equation}
    We use $S(z^{(t)}) = P(Lz^{(t)}) = P(u^{(t)})$ to get the correct left-hand side term.
    For the right hand side, we use $z^{(0)} = \widetilde D u^{(0)}$,  and $z^* = \widetilde D u^*$, which gives $\|z^{(0)} - z^*\|_2^2 = \|\widetilde D(u^{(0)} - u^*)\|_2^2 \le 4 \|u^{(0)} - u^*\|_2^2$.
    The last majoration comes from the fact that $\|\widetilde D\|_2^2 \leq 4$, as shown per \autoref{lemma:spectral_radius_L}.
    This yields
    \begin{equation}
        P(u^{(t)}) - P(u^*) \le \frac{2\widetilde \rho}{t}\|u^{(0)} - u^*\|_2^2
        \enspace .
    \end{equation}
\end{proof}

\subsection{Computing the spectrum of \texorpdfstring{$L$}{}}
\label{sub:proof:cvg_synthesis}

\begin{restatable}{lemma}{spectraL}[Singular values of $L$]
    \label{lemma:spectral_radius_L}
    The singular values of $L \in \Rset^{k\times k}$ are given by
    \[
        \sigma_{l} =
            \frac{1}{2\cos(\frac{\pi l}{2k+1})},
            \qquad \forall l \in \{1,\dots, k\}
            \enspace .
    \]
    Thus, $\|L\|_2 = \frac{2k + 1}{\pi} + o(1)$.
\end{restatable}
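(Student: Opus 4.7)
}

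The plan is to exploit the fact that $L = \widetilde D^{-1}$ (the discrete integration operator is the inverse of the modified finite difference operator defined in the notation block), so that the singular values of $L$ are the reciprocals of those of $\widetilde D$. I will therefore diagonalize $M := \widetilde D^{\top}\widetilde D$ explicitly rather than work with $L$ directly.

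First, I would compute $M$ entrywise. Since column $j$ of $\widetilde D$ has a $+1$ in row $j$ and a $-1$ in row $j+1$ (when $j<k$), $M$ is tridiagonal with diagonal $(2,2,\dots,2,1)$ (the last diagonal entry is $1$ because row $k$ of $\widetilde D$ has no successor) and off-diagonals equal to $-1$. The eigenvalue equation $Mv=\lambda v$ then reads $-v_{i-1}+2v_i-v_{i+1}=\lambda v_i$ in the interior, together with the boundary relations $2v_1-v_2=\lambda v_1$ and $-v_{k-1}+v_k=\lambda v_k$. The first boundary condition is equivalent to imposing $v_0=0$ in the interior recursion; the second is the non-standard one and will dictate the spectrum.

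Next I would solve the linear recursion with the ansatz $2-\lambda=2\cos\theta$, which gives the characteristic roots $e^{\pm i\theta}$ and, after enforcing $v_0=0$, the family $v_i = \sin(i\theta)$. Substituting into the right-boundary equation $v_{k-1}=(1-\lambda)v_k$ and using the product-to-sum identities $1-\cos\theta=2\sin^2(\theta/2)$ and $\sin\theta = 2\sin(\theta/2)\cos(\theta/2)$, the boundary condition simplifies (after dividing by $2\sin(\theta/2)$) to
\begin{equation*}
\cos\!\Bigl(k\theta+\tfrac{\theta}{2}\Bigr)=0,
\end{equation*}
so $\theta_n = \frac{(2n+1)\pi}{2k+1}$ for $n=0,1,\dots,k-1$. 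This yields $k$ distinct eigenvalues $\lambda_n = 4\sin^2(\theta_n/2)$ of $M$, which matches the dimension, hence the complete spectrum. The singular values of $L$ are therefore $1/(2\sin(\theta_n/2))$, and reindexing by $l=k-n\in\{1,\dots,k\}$, the complementary-angle identity $\sin\bigl(\frac{(2k+1-2l)\pi}{2(2k+1)}\bigr)=\cos\bigl(\frac{\pi l}{2k+1}\bigr)$ delivers the stated closed form $\sigma_l = 1/\bigl(2\cos(\pi l/(2k+1))\bigr)$.

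Finally, the operator norm is $\|L\|_2 = \max_l \sigma_l = \sigma_k$, attained where the cosine is smallest, i.e. at $l=k$, giving $\|L\|_2 = 1/\bigl(2\sin(\pi/(2(2k+1)))\bigr)$. A first-order Taylor expansion of $\sin$ at the origin gives $\sin x = x + O(x^3)$, so $1/(2\sin x) = 1/(2x) + O(x)$ as $x\to 0$. Plugging in $x = \pi/(2(2k+1))$ yields $\|L\|_2 = (2k+1)/\pi + O(1/k) = (2k+1)/\pi + o(1)$. I do not expect any real obstacle here; the only mildly delicate step is the algebraic manipulation turning the right-boundary condition into $\cos(k\theta+\theta/2)=0$, which is where the unusual ``$2k+1$'' denominator enters.
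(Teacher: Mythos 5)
Your proof is correct, and it takes a genuinely different route from the paper's. The paper never writes down eigenvectors: it forms $M_k=(L^\top L)^{-1}=\widetilde D\widetilde D^\top$, derives a three-term recursion for its characteristic polynomial by expanding the determinant, recognizes the auxiliary sequence as Chebyshev polynomials of the second kind via the identity $U_k(\cos\theta)\sin\theta=\sin((k+1)\theta)$, and reads the eigenvalues off the roots of $\sin((k+1)\theta)+\sin(k\theta)=0$, namely $\mu_l=4\cos^2\bigl(\tfrac{\pi l}{2k+1}\bigr)$. You instead diagonalize $\widetilde D^\top\widetilde D$ (same spectrum, being the reversal-conjugate of the paper's matrix) by the explicit ansatz $v_i=\sin(i\theta)$ and push the non-standard right boundary condition through to $\cos\bigl((k+\tfrac12)\theta\bigr)=0$; your angles $\theta_n=\tfrac{(2n+1)\pi}{2k+1}$ give $4\sin^2(\theta_n/2)$, which under $l=k-n$ is exactly the paper's $4\cos^2\bigl(\tfrac{\pi l}{2k+1}\bigr)$, and the completeness-by-counting argument ($k$ distinct values for a $k\times k$ symmetric matrix) closes the gap that the paper closes by root-counting of $P_k$. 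The two arguments encode the same trigonometry — the paper's Chebyshev identity is essentially your recursion solved in closed form — but yours is more elementary and self-contained (no appeal to Chebyshev polynomials) and yields the eigenvectors as a byproduct, while the paper's determinant recursion is the more mechanical template if one wanted to vary the corner perturbation of the tridiagonal matrix. Your asymptotic step $\|L\|_2=\sigma_k=\tfrac{1}{2\sin(\pi/(2(2k+1)))}=\tfrac{2k+1}{\pi}+o(1)$ coincides with the paper's.
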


\begin{proof}
    As $L$ is invertible, so is $L^\top L$.
    To compute the singular values $\sigma_l$ of $L$, we will compute the eigenvalues $\mu_l$ of $(L^\top L)^{-1}$ and use the relation
    \begin{equation}
        \label{eq:proof:singular_eigen}
        \sigma_l = \frac{1}{\sqrt{\mu_l}}
    \end{equation}
    With simple computations, we obtain
    \begin{equation}
        M_k = (L^\top L)^{-1}
            = L^{-1}(L^\top)^{-1}
            = \widetilde D\widetilde D^\top
            = \begin{bmatrix}
            1 & -1 & 0 & \dots\\
            -1 & 2 & -1 & 0 & \dots\\
             &  \ddots & \ddots & \ddots\\
             & 0 &-1& 2 & -1\\
             &  &  & -1 & 2\\
            \end{bmatrix}
    \end{equation}
    This matrix is tri-diagonal with a quasi-toepliz structure. Its characteristic polynomial $P_k(\mu)$ is given by:
    \begin{align}
       P_k(\mu) = |\mu \Id - M_k|
       & = \begin{vmatrix}
            \mu - 1 & 1 & 0 & \dots\\
            1 & \mu - 2 & 1 & 0 & \dots\\
             &  \ddots & \ddots & \ddots\\
             & 0 &1& \mu - 2 & 1\\
             &  &  0 & 1 & \mu - 2\\
        \end{vmatrix}\\
        & = (\mu - 1) Q_{k-1}(\mu) - Q_{k-2}(\mu)
        \label{eq:proof:dev_Pk}
    \end{align}
    where \autoref{eq:proof:dev_Pk} is obtained by developing the determinant relatively to the first line and $Q_k(\mu)$ is the characteristic polynomial of matrix $\widetilde M_k$ equal to $M_k$ except for the the top left coefficient which is replaced by $2$
    \begin{equation}
        \widetilde M_k = \begin{bmatrix}
            2 & -1 & 0 & \dots\\
            -1 & 2 & -1 & 0 & \dots\\
             &  \ddots & \ddots & \ddots\\
             & 0 &-1& 2 & -1\\
             &  & 0 & -1 & 2\\
            \end{bmatrix}
    \end{equation}
    Using the same development as \autoref{eq:proof:dev_Pk}, one can show that $Q_k$ verifies the recurrence relationship
    \begin{equation}
        Q_k(\mu) = (\mu - 2)Q_{k-1}(\mu) - Q_{k-2}(\mu);
            \qquad Q_{1}(\mu) = 2 - \mu,
            \quad Q_0(\mu) = 1
        \enspace .
    \end{equation}
    Using this with \autoref{eq:proof:dev_Pk} yields
    \begin{equation}
        \label{eq:proof:PQ_relation}
        P_k(\mu) = Q_k(\mu) + Q_{k-1}(\mu)
    \end{equation}
    With the change of variable $\nu = \frac{\mu - 2}{2}$ and denoting $U_k(\nu) = Q_k(2 + 2\nu)$, the recursion becomes
    \begin{equation}
        U_k(\nu) = 2\nu U_{k-1}(\nu) - U_{k-2}(\nu);
        \qquad U_{1}(\nu) = 2\nu,
        \quad U_0(\mu) = 1
        \enspace .
    \end{equation}
    This recursion defines the Chebyshev polynomials of the second kind \citep{Chebyshev1853} which verifies the following relation
    \begin{equation}
        \forall \theta \in [0, 2\pi], \quad
        U_k(\cos(\theta))\sin(\theta) =
          \sin((k+1)\theta)
          \enspace .
    \end{equation}
    Translating this relationship to $Q_k$ gives
    \begin{equation}
        \forall \theta \in [0, 2\pi], \quad
        Q_k(2 + 2\cos(\theta))\sin(\theta) =
          \sin((k+1)\theta)
          \enspace .
    \end{equation}
    Using this in \autoref{eq:proof:PQ_relation} shows that for $\theta \in [0, 2\pi[$ $P_k$ verify
    \begin{equation}
        P_k(2 + 2\cos(\theta)\sin(\theta)
             = \sin((k+1)\theta) + \sin(k\theta)
                \enspace .
    \end{equation}
    The equation
    \begin{equation}
        \sin((k+1)\theta) + \sin(k\theta) = 0
        \enspace ,
    \end{equation}
    has $l$ solution in $[0, 2\pi[$ that are given by $\theta_l = \frac{2\pi l}{2k+1}$ for $l \in \{1, \dots n\}$.
    As for all $l$, $\sin(\theta_l) \neq 0$, the values $\mu_l = 2 + 2\cos(\theta_l) = 4\cos^2(\frac{\pi l}{2k+1})$ are the roots of $P_k$ and therefor the eigenvalues of $M_k$.
    Using \autoref{eq:proof:singular_eigen} yields the expected value for $\sigma_l$.

    The singular value of $L$ is thus obtain for $l = k$ and we get
    \begin{align}
        \|L\|_2 = \sigma_k
            & = \frac{1}{2\cos(\frac{\pi k }{2k+1})}
             = \frac{1}{2\cos(\frac{\pi}{2}(1 - \frac{1}{2k+1}))}
            \enspace ,\\
            & = \frac{1}{2\sin(\frac{\pi}{2}\frac{1}{2k+1})}
             = \frac{2k + 1}{\pi} + o(1)
            \enspace .
    \end{align}
    Where the last approximation comes from $\frac{1}{sin(x)} = 1/x + o(1)$ when $x$ is close to $0$.
\end{proof}

\subsection{Proof for \autoref{prop:lower_bound_AL}}
\label{sub:proof:prop:lower_bound_AL}

\speedLowerBound*

\begin{proof}
    Finding the norm of $AL$ can be written as
    \begin{align}
        \label{eq:proof:eigen_def}
       \|AL\|_2^2 = \max_{x\in\Rset^k} x L^\top A^\top A L x;
            \quad \st \|x\|_2 = 1
    \end{align}
    From \autoref{lemma:spectral_radius_L}, we can write $L= W^\top\Sigma V$ with $V$, $W$ two unitary matrices and $\Sigma$ a diagonal matrix with $\Sigma_{l,l} = \sigma_l$ for all $l\in \{1,..,k\}$.

    First, we consider the case where $A^\top A$ is a rank one matrix with $A^\top A = \|A\|_2^2 u_1u_1^\top$, with vector $u_1$ uniformly sampled from the $\ell_2$-ball and fixed $\|A\|_2$.
    In this case, as $W$ is unitary, $w_1 = Wu_1$ is also a vector uniformly sampled from the sphere.
    Also as $V$ is unitary, it is possible to re-parametrize \autoref{eq:proof:eigen_def} by $y = Vx$ such that
    \begin{equation}
        \max_{y\in\Rset^k} \|A\|_2^2y^\top\Sigma w_1w_1^\top\Sigma y;
            \quad \st \|y\|_2 = 1
    \end{equation}
    This problem can be maximized by taking $y = \frac{\Sigma u_1}{\|\Sigma u_1\|_2}$, which gives
    \begin{equation}
        \|AL\|_2^2 = \|A\|_2^2\|\Sigma w_1\|_2^2
    \end{equation}
    Then, we compute the expectation of $\|\Sigma w_1\|_2^2$ with respect with $w_1$, a random vector sampled in the $\ell_2$ unit ball,
    \begin{align}
        \mathbb E_{w_1}[\|\Sigma w_1\|_2^2]
            = &  \sum_{l=1}^k\sigma_l^2\mathbb E[u_{1,i}^2]
            =  \sum_{l=1}^k \frac{1}{4\cos^2{\frac{\pi l}{2k + 1}}}\frac{1}{k}
            =  \frac{1}{2\pi}\sum_{l=1}^k \frac{\pi}{2k}\frac{1}{\cos^2{\frac{\pi l}{2k + 1}}}
            \enspace .
    \end{align}
    Here, we made use of the fact that for a random vector $u_1$ on the sphere in dimension $k$, $\mathbb E[u_{1, i}] = \frac{1}{k}$
    In the last part of the equation, we recognize a Riemann sum for the interval $[0, \frac{\pi}{2}[$.
    However, $x\mapsto \frac{1}{\cos^2(x)}$ is not integrable on this interval. As the function is positive and monotone, we can still use the integral to highlight the asymptotic behavior of the series.
    For $k$ large enough, we consider the integral
    \begin{equation}
        \int_0^{\frac{\pi}{2} - \frac{\pi}{2k+1}} \frac{1}{\cos^2({x})} dx
        = \left[\frac{sin(x)}{\cos(x)}\right]_{0}^{\frac{\pi}{2} - \frac{\pi}{2k+1}}
        = \frac{\cos{ \frac{\pi}{2k+1}}}{\sin{ \frac{\pi}{2k+1}}} =  \frac{2k+1}{\pi} + o(1)
    \end{equation}
    Thus, for $k$ large enough, we obtain
    \begin{equation}
        \mathbb E_{w_1}\left[\|\Sigma w_1\|_2^2\right] =  \frac{1}{2\pi}\left(\frac{2k+1}{\pi} + o(1)\right)
    \end{equation}
    Thus, we get
    \begin{equation}
        \mathbb E\left[\frac{\|AL\|_2^2}{\|A\|_2^2}\right]
            = \left(\frac{k+\frac12}{\pi^2} + o(1)\right)
    \end{equation}
    This concludes the case where $A^\top A$ is of rank-1 with uniformly distributed eigenvector.

    In the case where $A^\top A$ is larger rank, it is lower bounded by $\|A\|_2^2u_1u_1^\top$ where $u_1$ is its eigenvector associated to its largest eigenvalue, since it is \psd{}.
    Since $A^\top A$ is a Whishart matrix, its eigenvectors are uniformly distributed on the sphere \citep{Silverstein1989}.
     We can thus use the same lower bound  as previously for the whole matrix.

\end{proof}

\section{Proof for \autoref{sec:prox_backprop}}
\label{sub:proof:prox_backprop}

\subsection{Proof for \autoref{prop:jacobian_prox_tv}}
\label{sub:proof:prop:jacobian_prox_tv}

\jacobianProxTV*

First, we recall \autoref{lemma:st_weak_diff} to weakly derive the soft-thresholding. \\

\begin{lemma}[Weak derivative of the soft-thresholding; \citealt{Deledalle2014}]
\label{lemma:st_weak_diff}
The soft-thresholding operator $\text{ST}: \Rset \times \Rset_+ \mapsto \Rset$ defined by $\text{ST}(t, \tau) = \sign(t)(|t| - \tau)_+$ is weakly differentiable with weak derivatives
\[
        \frac{\partial \ST}{\partial t}(t, \tau) =
        \ind_{\{|t| > \tau\}} \enspace ,
        \qquad\text{ and }\qquad
        \frac{\partial \ST}{\partial \tau}(t, \tau) =
        - \sign(t) \cdot \ind_{\{|t| > \tau\}} \enspace,
\]
where
\[
   \ind_{\{|t| > \tau\}} =
   \begin{cases}
       1, & \text{ if } |t| > \tau, \\
       0, & \text{ otherwise}.
       \end{cases}
   \]

\end{lemma}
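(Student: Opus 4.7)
The plan is to observe that $\ST(t,\tau) = \sign(t)(|t|-\tau)_+$ is continuous and piecewise affine on $\Rset \times \Rset_{+}$, with an explicit description on three open regions: on $\Omega_{+} = \{(t,\tau) : t > \tau\}$ it equals $t-\tau$, on $\Omega_{0} = \{(t,\tau) : |t| < \tau\}$ it equals $0$, and on $\Omega_{-} = \{(t,\tau) : t < -\tau\}$ it equals $t+\tau$. On each of these three open sets the map is smooth with classical partial derivatives matching the formulas in the statement, so the only work is to promote these classical derivatives to weak derivatives across the boundary lines $\{t = \tau\}$ and $\{t = -\tau\}$.

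To do this, I would first recall that $\ST$ is $1$-Lipschitz jointly in $(t,\tau)$ (as a composition of $t \mapsto |t|$, $s \mapsto (s-\tau)_{+}$, and the sign factor), hence belongs to $W^{1,\infty}_{\mathrm{loc}}(\Rset \times \Rset_{+})$ by Rademacher's theorem. This already guarantees the existence of a weak gradient coinciding with the classical gradient almost everywhere; it then remains only to identify that weak gradient with the candidates $g_{t} = \ind_{\{|t|>\tau\}}$ and $g_{\tau} = -\sign(t)\,\ind_{\{|t|>\tau\}}$.

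To identify them, I would verify the defining integration-by-parts identity directly: for any test function $\varphi \in C_{c}^{\infty}(\Rset \times \Rset_{+})$,
\[
\int \ST(t,\tau)\,\partial_{t}\varphi(t,\tau)\,dt\,d\tau = -\int g_{t}(t,\tau)\,\varphi(t,\tau)\,dt\,d\tau,
\]
and similarly for $\partial_{\tau}$. I would split the left-hand integral over the three regions $\Omega_{+}, \Omega_{0}, \Omega_{-}$, apply classical integration by parts in each (treating $\tau$ as a parameter for the $t$-derivative case, and symmetrically for the $\tau$-derivative case), and collect the boundary terms along $t=\pm\tau$. Because $\ST$ is continuous across these interfaces, the boundary contributions from adjacent regions cancel pairwise, leaving exactly the bulk term $-\int g_{t}\varphi$ on the right. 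The computation for $\partial_{\tau}$ is structurally identical, with the sign factor $-\sign(t)$ arising because in $\Omega_{+}$ the $\tau$-derivative of $t-\tau$ is $-1$ while in $\Omega_{-}$ the $\tau$-derivative of $t+\tau$ is $+1$.

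The only delicate step is the bookkeeping of the boundary terms: one must check that the outward normals on the two sides of $\{t=\tau\}$ (or $\{t=-\tau\}$) combine the values of $\ST$ approached from each side into a vanishing contribution, which is precisely where the continuity of $\ST$ (as opposed to its derivatives) is used. No Dirac masses appear because the jump in the classical derivative is bounded and supported on a set of Lebesgue measure zero, so the formulas $g_{t}$ and $g_{\tau}$ are valid as $L^{\infty}$ weak derivatives, completing the proof.
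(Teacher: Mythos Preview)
Your proposal is correct. The argument via piecewise affine structure, continuity across the interfaces $\{t=\pm\tau\}$, and integration by parts against test functions is a standard and valid way to establish that the classical a.e.\ derivatives are in fact the weak derivatives; the Lipschitz/Rademacher observation is a nice shortcut that guarantees existence before identification. One minor quibble: $\ST$ is Lipschitz jointly in $(t,\tau)$ but with constant $\sqrt{2}$ rather than $1$ (since both partials can equal $\pm 1$ simultaneously on $\Omega_\pm$); this is irrelevant for the argument but worth stating accurately.

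As for comparison with the paper: the paper does not actually prove this lemma. It is stated as a cited result from \citet{Deledalle2014} and simply recalled before being used in the proof of \autoref{prop:jacobian_prox_tv}. So there is no ``paper's own proof'' to compare against here; your write-up supplies what the paper omits by reference.
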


A very important remark here is to notice that if one denote $z = \ST(t, \tau)$, one can rewrite these weak derivatives as
\begin{equation}
        \frac{\partial \ST}{\partial t}(t, \tau) =
        \ind_{\{|z| > 0\}} \enspace ,
        \qquad\text{ and }\qquad
        \frac{\partial \ST}{\partial \tau}(t, \tau) =
        - \sign(z) \cdot \ind_{\{|z| > 0\}} \enspace.
\end{equation}
Indeed, when $|t| > \tau$, $|z| = |t| - \tau > 0$ and conversely, $|z| = 0$ when $|t| < \tau$. Moreover, when $|t| > \tau$, we have $\sign(t) = \sign(z)$ and thus the two expressions for $\frac{\partial\ST}{\partial \tau}$ match.

Using this \autoref{lemma:st_weak_diff}, we now give the proof of \autoref{prop:jacobian_prox_tv}.

\begin{proof}
    The proof is inspired from the proof from \citet[Proposition 1]{Bertrand2020}.
    We denote $u(x, \mu) = \prox_{\mu\|\cdot\|_{TV}}(x)$, hence $u(x, \mu)$ is defined by
    \begin{equation}
        \label{eq:proof:tv_loss}
        u(x, \mu) = \argmin_{\hat u} \frac12 \|x - \hat u\|_2^2
                    + \mu\|\hat u\|_{TV}
    \end{equation}
    Equivalently, as we have seen previously in \autoref{eq:tv_synthesis}, using the change of variable $\hat u = L \hat z$ and minimizing over $\hat z$ gives
    \begin{equation}
        \label{eq:proof:tv_prox_synthesis}
        \min_{\hat z} \frac12 \|x  - L\hat z\|_2 + \mu\|R \hat z\|_1
        \enspace .
    \end{equation}
    We denote by $z(x, \mu)$ the minimizer of the previous equation.
    Thus, the solution $u(x, \mu)$ of the original problem \autoref{eq:proof:tv_loss} can be recovered using $u(, \mu) = Lz(x, \mu)$.
    Iterative PGD can be used to solve \autoref{eq:proof:tv_prox_synthesis} and $z(x, mu)$ is a fixed point of the iterative procedure. That is to say the solution $z$ verifies
    \begin{equation}
        \label{eq:fixed_point}
        \begin{cases}
            z_1(x, \mu) = & z_1(x, \mu) - \frac{1}{\rho}(L^\top(Lz(x, \mu) - x))_1 \enspace ,\\
            z_i(x, \mu) = &
                \ST\left(z_i(x, \mu)
                    - \frac{1}{\rho}(L^\top(Lz(x, \mu) - x))_i,
                    \frac{\mu}{\rho}\right)
            \quad \text{for } i=2\dots k\enspace .
        \end{cases}
    \end{equation}
    Using the result from \autoref{lemma:st_weak_diff}, we can differentiate \autoref{eq:fixed_point} and obtain the following equation for the weak Jacobian $\widehat J_x(x, \mu) = \frac{\partial z}{\partial x}(x, \mu)$ of $z(x, \mu)$ relative to $x$
    \begin{equation}
        \widehat J_x(x, \mu) = \begin{pmatrix}
            1\\
            \ind_{\{|z_2(x, \mu)| > 0 \}}\\
            \vdots\\
            \ind_{\{|z_k(x, \mu)| > 0 \}}
        \end{pmatrix}
        \odot \left[(\Id - \frac{1}{\rho}L^\top L) \widehat J_x(x, \mu)
                    + \frac{1}{\rho}L^\top \Id\right]
        \enspace .
    \end{equation}
    Identifying the non-zero coefficient in the indicator vectors yields
    \begin{equation}
         \left\{\begin{array}{lll}
        \widehat J_{x,\mathcal S^c}(x, \mu) & = & 0\\
        \widehat J_{x, \mathcal S}(x, \mu)   & = &
            (\Id - \frac{1}{\rho} L_{:, \mathcal S}^\top L_{:, \mathcal S}) \widehat J_{x,\mathcal S}(x, \mu)
            + \frac{1}{\rho}L_{:, \mathcal S}^\top
        \enspace . \label{eq:diff_fix_point}
        \end{array}
        \right.
    \end{equation}
    As, $L$ is invertible, so is $ L_{:, \mathcal S}^\top L_{:, \mathcal S}$ for any support $\mathcal S$ and solving the second equations yields the following
    \begin{equation}
        \widehat J_{x, \mathcal S} = (L_{:, \mathcal S}^\top L_{:, \mathcal S})^{-1} L_{:, \mathcal S}^\top
    \end{equation}

    Using $u = Lz$ and the chain rules yields the expecting result for the weak  Jacobian relative to $x$, noticing that as $\hat J_{x, \mathcal S^c} = 0$, $L \hat J_x = L_{:, \mathcal S} \hat J_{x, \mathcal S}$.\\

    Similarly, concerning, $\widehat J_{\mu}(x, \mu)$, we use the result from \autoref{lemma:st_weak_diff} an differentiale \autoref{eq:fixed_point} and obtain $\widehat J_{\mu}(x, \mu) = \frac{\partial z}{\partial \mu}(x, \mu)$ of $z(x, \mu)$ relative to $\mu$

    \begin{equation}
        \widehat J_{\mu}(x, \mu) = \begin{pmatrix}
            1\\
            \ind_{\{|z_2(x, \mu)| > 0 \}}\\
            \vdots\\
            \ind_{\{|z_k(x, \mu)| > 0 \}}
        \end{pmatrix}
        \odot \left[(\Id - \frac{1}{\rho}L^\top L) \widehat J_{\mu}(x, \mu) \right] + \frac{1}{\rho}
            \begin{pmatrix}
            1\\
            - \sign(z_2(x, \mu)) \ind_{\{|z_2(x, \mu)| > 0 \}}\\
            \vdots\\
            - \sign(z_k(x, \mu)) \ind_{\{|z_k(x, \mu)| > 0 \}}
        \end{pmatrix}
        \enspace .
    \end{equation}

    Identifying the non-zero coefficient in the indicator vectors yields

    \begin{equation}
         \left\{\begin{array}{lll}
        \widehat J_{\mu,\mathcal S^c}(x, \mu) & = & 0\\
        \widehat J_{\mu, \mathcal S}(x, \mu)   & = &
            \widehat J_{\mu,\mathcal S^c}(x, \mu) -
            \frac{1}{\rho} L_{:, \mathcal S}^\top L_{:, \mathcal S}\widehat J_{\mu,\mathcal S^c}(x, \mu) - \frac{1}{\rho}\sign(z_S(x, \mu))
        \enspace .
        \label{eq:diff_mu_fix_point}
        \end{array}
        \right.
    \end{equation}

    As previous, solving the second equation yields the following

    \begin{equation}
        \widehat J_{\mu, \mathcal S} = - (L_{:, \mathcal S}^\top L_{:, \mathcal S})^{-1} \sign(z_S(x, \mu))
    \end{equation}

    Using $u = Lz$ and the chain rules yields the expecting result for the weak  Jacobian relative to $\mu$, noticing that as $\hat J_{\mu, \mathcal S^c} = 0$. \\

\end{proof}

\subsection{Convergence of the weak Jacobian}

\begin{proposition}{Linear convergence of the weak Jacobian}
    \label{prop:cvg_jacobian}
    We consider the mapping $z^{(T_in)}: , \mu\Rset^k\times \Rset_+ \mapsto \Rset^k$ defined where $z^{(T_in)}(x)$ is defined by recursion
    \begin{equation}
        \label{eq:mapping_lista}
        z^{(t)}(x, \mu) = ST(z^{(t-1)}(x, \mu) - \frac{1}{\|L\|_2^2}L^\top(Lz^{(t-1)}(x, \mu) - x), \frac{\mu}{\|L\|_2^2}
        \enspace .
    \end{equation}
    Then the weak $\mathcal J_x = L\frac{\partial z^{(T_in)}}{\partial x}$ and $\mathcal J_\mu = L\frac{\partial z^{(T_in)}}{\partial \mu}$ of this mapping relative to the inputs $x$ and $\mu$ converges linearly toward the weak Jacobian $J_x$ and $J_\mu$ of $\prox_{\mu\|\cdot\|_{TV}}(x)$ defined in \autoref{prop:jacobian_prox_tv}.
\end{proposition}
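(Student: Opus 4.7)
The plan is to exploit the finite-time support identification property of ISTA applied to the inner Lasso in \autoref{eq:proof:tv_prox_synthesis}. Once the support of $z^{(t)}$ matches that of $z^\star = z(x,\mu)$ and the signs have stabilized, the soft-thresholding becomes an affine map on the active coordinates, and the weak-Jacobian recursion reduces to a standard linear contraction that converges to the exact expression from \autoref{prop:jacobian_prox_tv}.

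First I would invoke the (well-known) fact that the iterates $z^{(t)}$ produced by \autoref{eq:mapping_lista} converge to the unique minimizer $z^\star$ of the strictly convex synthesis problem \autoref{eq:proof:tv_prox_synthesis}, and that under the standard non-degeneracy (strict complementarity) assumption on $z^\star$ -- no coordinate with $z^\star_i = 0$ sits exactly at the dual bound -- ISTA identifies the support in finite time: there exists $T_0$ such that for all $t \geq T_0$, $\supp(z^{(t)}) = \mathcal S$ and $\sign(z^{(t)})_{\mathcal S} = \sign(z^\star)_{\mathcal S}$. The remark following \autoref{lemma:st_weak_diff} then guarantees that the weak derivative of the soft-thresholding in \autoref{eq:mapping_lista} equals $1$ on $\mathcal S$ and $0$ on $\mathcal S^c$ from iteration $T_0$ onward.

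Differentiating \autoref{eq:mapping_lista} coordinatewise as in the proof of \autoref{prop:jacobian_prox_tv} and restricting to the stable support therefore gives, for every $t \geq T_0$, the affine recursions
\begin{align*}
    \hat J^{(t+1)}_{x,\mathcal S^c} &= 0, & \hat J^{(t+1)}_{x,\mathcal S} &= M \hat J^{(t)}_{x,\mathcal S} + \tfrac{1}{\rho} L_{:,\mathcal S}^\top, \\
    \hat J^{(t+1)}_{\mu,\mathcal S^c} &= 0, & \hat J^{(t+1)}_{\mu,\mathcal S} &= M \hat J^{(t)}_{\mu,\mathcal S} - \tfrac{1}{\rho}\sign(z^\star)_{\mathcal S},
\end{align*}
where $M := I - \tfrac{1}{\rho} L_{:,\mathcal S}^\top L_{:,\mathcal S}$ and $\rho = \|L\|_2^2$. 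Since $L$ is invertible, $L_{:,\mathcal S}$ has full column rank, so $L_{:,\mathcal S}^\top L_{:,\mathcal S}$ is positive definite and $0 < \lambda_{\min}(L_{:,\mathcal S}^\top L_{:,\mathcal S}) \leq \lambda_{\max}(L_{:,\mathcal S}^\top L_{:,\mathcal S}) \leq \|L\|_2^2 = \rho$. Hence $\|M\|_2 = 1 - \lambda_{\min}(L_{:,\mathcal S}^\top L_{:,\mathcal S})/\rho =: \beta < 1$, and each iteration contracts linearly at rate $\beta$ toward its unique fixed point. Those fixed points solve $L_{:,\mathcal S}^\top L_{:,\mathcal S} J = L_{:,\mathcal S}^\top$ and $L_{:,\mathcal S}^\top L_{:,\mathcal S} J = -\sign(z^\star)_{\mathcal S}$ respectively, i.e.\ exactly the matrices $\hat J_{x,\mathcal S}$ and $\hat J_{\mu,\mathcal S}$ of \autoref{prop:jacobian_prox_tv}. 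Multiplying by $L$ and using $L\hat J = L_{:,\mathcal S}\hat J_{\mathcal S}$ (since the complement block vanishes) yields $\mathcal J_x^{(T_{in})} \to J_x$ and $\mathcal J_\mu^{(T_{in})} \to J_\mu$ at geometric rate $\beta$.

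The main obstacle is the finite-time support identification and its reliance on non-degeneracy: on the measure-zero set of inputs where strict complementarity fails, the weak Jacobian of the limit need not equal the limit of the weak Jacobians, and the statement implicitly excludes such pathological inputs. A secondary, more routine issue is bookkeeping the transient $T_0$ iterations before identification, which only affects the multiplicative constant in the final rate $\beta^{T_{in}-T_0}$ and is absorbed in the $O(\cdot)$ of the linear convergence claim.
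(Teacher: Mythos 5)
Your argument is correct, but it takes a more self-contained route than the paper. The paper's proof is essentially a two-line citation: since $L$ is invertible the inner synthesis problem \autoref{eq:proof:tv_prox_synthesis} is strongly convex with a unique minimizer, so \citet[Proposition~2]{Bertrand2020} applies directly and gives linear convergence of the ISTA weak Jacobians $\frac{\partial z^{(t)}}{\partial x}$ and $\frac{\partial z^{(t)}}{\partial \mu}$ toward those of $z(x,\mu)$, after which left-multiplication by $L$ transfers the result to the analysis variable. What you have done is unpack the content of that cited proposition: finite-time support identification under strict complementarity, followed by the observation that the differentiated fixed-point recursion restricted to the identified support is an affine iteration with matrix $M = \Id - \tfrac{1}{\rho}L_{:,\mathcal S}^\top L_{:,\mathcal S}$ satisfying $\|M\|_2 = 1 - \lambda_{\min}(L_{:,\mathcal S}^\top L_{:,\mathcal S})/\rho < 1$, whose unique fixed points are exactly the expressions of \autoref{prop:jacobian_prox_tv}. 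Your computation of the reduced recursion matches the paper's own equations \autoref{eq:diff_fix_point} and \autoref{eq:diff_mu_fix_point} from the proof of \autoref{prop:jacobian_prox_tv}, so the two arguments are consistent. The trade-off: the paper's version is shorter and delegates the delicate points (support identification, handling of the transient, degeneracy) to the cited reference, while yours makes the contraction rate and the non-degeneracy caveat explicit -- a caveat that is genuinely needed (the proposition as stated silently assumes it, as does the cited result) and that you were right to flag, along with the correct remark that the first coordinate is never thresholded and thus always belongs to the active set.
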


This mapping defined in \autoref{eq:mapping_lista} corresponds to the inner network in LPGD-LISTA when the weights of the network have not been learned.

\begin{proof}
    As $L$ is invertible, problem \autoref{eq:proof:tv_prox_synthesis} is strongly convex and have a unique solution.
    We can thus apply the result from \citet[Proposition~2]{Bertrand2020} which shows the linear convergence
    of the weak Jacobian $\hat{\mathcal J}_x = \frac{\partial z^{(T_in)}}{\partial x}$ and
    $\hat{\mathcal J}_\mu = \frac{\partial z^{(T_in)}}{\partial \mu}$ for ISTA toward $\hat J_x$ and
    $\hat J_\mu$ of the synthesis formulation of the prox.
    Using the linear relationship between the analysis and the synthesis formulations yields the expected result.
\end{proof}

\subsection{Estimating \texorpdfstring{$T_{in}$}{Tin} and \texorpdfstring{$T$}{T} to achieve \texorpdfstring{$\delta$}{given} error}

Using inexact proximal gradient descent results from \citet{Schmidt2011} and \citet{Machart2012}, we compute the scaling of $T_{in}$ and $T$ to achieve a given error level $\delta > 0$.

\begin{restatable}{proposition}{TinLowerBound}[Scaling of $T$ and $T_{in}$ \wrt{} the error level $\delta$]
    \label{prop:T_in_lower_bound}
    Let $\delta$ the error defined such as $P_x(u^{(T)}) - P_x(u^*) \le \delta$. \\
    We suppose there exists some constants $C_0 \ge \|u^{(0)} - u^*\|_2$ and $C_1 \ge \max_\ell\|u^{(\ell)} - \prox_{\frac{\mu}{\rho}}(u^{(\ell)})\|_2$ \\
    Then, $T$ the number of layers for the global network and $T_{in}$ the inner number of layers for the \proxTV{} scale are given by
    \[
    T_{in} = \frac{\ln{\frac{1}{\delta}} + \ln{6\sqrt{2\rho}C_1}}{\ln{\frac{1}{1-\gamma}}}
    \quad \text{and} \quad T = \frac{2\rho C_0^2}{\delta}
    \]
    with $\rho$ defined as in \autoref{eq:analysis_tv_PGD}
\end{restatable}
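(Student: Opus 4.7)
The plan is to combine two ingredients: the convergence theorem for inexact proximal gradient descent from \citet{Schmidt2011,Machart2012} applied to the outer unrolled PGD, and the linear convergence rate of ISTA applied to the strongly convex synthesis reformulation \eqref{eq:synth_prox_tv} of the inner \proxTV{}. The outer iterates behave like exact PGD corrupted by a sequence of per-step proximal optimization errors $\epsilon_\ell$, and the inner LISTA with $T_{in}$ layers produces those errors at a geometric rate in $T_{in}$ because $L^\top L$ is positive definite.

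First, I will invoke the inexact PGD bound, which (schematically) reads
\[
P_x(u^{(T)}) - P_x(u^*) \;\le\; \frac{\rho}{2T}\Bigl(\|u^{(0)}-u^*\|_2 + \sum_{\ell=1}^{T}\sqrt{2\epsilon_\ell/\rho}\Bigr)^{\!2}.
\]
Bounding $\|u^{(0)}-u^*\|_2$ by $C_0$ and splitting the target error $\delta$ evenly between the principal term and the cumulative proximal error, the principal piece $\rho C_0^2/(2T) \le \delta/2$ already forces $T \ge 2\rho C_0^2/\delta$, giving the announced scaling for $T$.

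Next, I will turn the remaining half of the budget into a per-step requirement on $\epsilon_\ell$. Because the synthesis prox problem \eqref{eq:synth_prox_tv} is strongly convex in $z$ and ISTA is initialized from $z^{(0)}=\widetilde D u^{(\ell)}$, the inner iterates satisfy a linear bound of the form $\epsilon_\ell \le \tfrac{\rho}{2}\,C_1^2\,(1-\gamma)^{T_{in}}$, using the definition of $C_1$ to majorize the initial inner distance $\|z^{(0)}-z^*\|_2$. Forcing the sum $\sum_\ell \sqrt{2\epsilon_\ell/\rho} \le C_0$ (so that the total rate constant only doubles) reduces to a condition of the form $(1-\gamma)^{T_{in}/2} \le \delta/(6\sqrt{2\rho}\,C_1)$, which solves for $T_{in}$ to yield the stated logarithmic expression.

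The main obstacle I anticipate is the bookkeeping of constants in the inexact PGD bound: Schmidt's inequality mixes linear and square-root contributions of the $\epsilon_\ell$, and one has to choose the split between the exact-PGD and propagated-error contributions so that the final numerical factors match $2\rho C_0^2$ and $6\sqrt{2\rho}\,C_1$ exactly. A secondary step is to verify that $\gamma$ is a constant independent of $T$ (namely, the rate governed by the conditioning of $L^\top L$ on the relevant support), so that $T_{in}$ grows only logarithmically in $1/\delta$ rather than with $T$ itself.
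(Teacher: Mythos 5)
Your plan is essentially the paper's proof: both start from the Machart et al.\ inexact-PGD bound, majorize the per-layer proximal error by $C_1^2(1-\gamma)^{T_{in}}$ via linear convergence of ISTA on the strongly convex synthesis prox, and then balance the initialization term against the accumulated error term. The paper obtains the split by minimizing over a quadratic in $\sqrt{T}$ (the minimal $T_{in}$ is where the discriminant vanishes, and the double root gives $T$), but that optimum is exactly your posited balance $\sum_\ell\sqrt{2\epsilon_\ell/\rho}=C_0$, so the constants coincide (up to the paper's own dropped factor of $2$ and $C_0$ in the final displayed $T_{in}$); only your intermediate claim that $\rho C_0^2/(2T)\le\delta/2$ forces $T\ge 2\rho C_0^2/\delta$ is off by a factor of two, which your subsequent doubling-of-the-bracket argument corrects.
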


\begin{proof}
As discussed by \citet{Machart2012}, the global convergence rate of inexact PGD with $T_{in}$ inner iteration is given by
\begin{equation}
    \label{eq:ipdg:bound}
    \begin{split}
    & P_x(u^{(T)}) - P_x(u^*) \le \\
    & \hskip5em\frac{\rho}{2T}\left( \|u^{(0)} - u^*\|_2 + 3 \sum_{\ell=1}^T\sqrt{\frac{2(1 - \gamma)^{T_{in}}\|u^{(\ell-1)} - \prox_{\frac{\mu}{\rho}}(u^{(\ell-1)})\|_2^2}{\rho}}\right)^2
    \end{split}
    \enspace ,
\end{equation}
where $\gamma$ is the condition number for $L$ \ie $\frac{\cos(\frac{\pi}{2k+1})}{\sin(\frac{\pi}{2k+1})}$.

We are looking for minimal parameters $T$ and $T_{in}$ such that the error bound in \autoref{eq:ipdg:bound} is bellow a certain error level $\delta$.

We consider the case where there exists some constants $C_0 \ge \|u^{(0)} - u^*\|_2$ and $C_1 \ge \max_\ell\|u^{(\ell)} - \prox_{\frac{\mu}{\rho}}(u^{(\ell)})\|_2$ upper bounding how far the initialization can be compared to the result of the global problem and the sub-problems respectively.\\
We denote $\alpha_1=3\sqrt{\frac{2}{\rho}}C_1$.
The right hand side of \autoref{eq:ipdg:bound} can be upper bounded by as
\begin{equation}
    \begin{split}
        &\frac{\rho}{2T}\left(\|u^{(0)} - u^*\|_2 + 3 \sum_{\ell=1}^T\sqrt{\frac{2(1 - \gamma)^{T_{in}}\|u^{(\ell-1)} - \prox_{\frac{\mu}{\rho}}(u^{(\ell-1)})\|_2^2}{\rho}}\right)^2\\
    &\hskip20em
    \le
    \frac{\rho}{2T}\left(C_0 + \alpha_1T(1 - \gamma)^{T_{in}/2}\right)^2
\end{split}
\end{equation}
Then, we are looking for $T, T_{in}$ such that this upper bound is lower than $\delta$, \ie{}
\begin{align}
    & ~\frac{\rho}{2T}\left(C_0 + \alpha_1T(1 - \gamma)^{T_{in}/2}\right)^2 \le \delta\\
    \Leftrightarrow &
        \left(C_0 + \alpha_1T(1 - \gamma)^{T_{in}/2}\right)^2
              - \frac{2\delta}{\rho}T \le 0\\
    \Leftrightarrow &
        \left(C_0 + \alpha_1T(1 - \gamma)^{T_{in}/2}
              - \sqrt{\frac{2\delta}{\rho}}\sqrt{T}\right)
        \underbrace{\left(B + \alpha_1T(1 - \gamma)^{T_{in}/2}
              + \sqrt{\frac{2\delta}{\rho}}\sqrt{T}\right)}_{\ge 0}\le 0\\
    \Leftrightarrow &
        C_0 + \alpha_1T(1 - \gamma)^{T_{in}/2}
              - \sqrt{\frac{2\delta}{\rho}}\sqrt{T} \le 0\\
\end{align}
Denoting $\alpha_2 = \sqrt{\frac{2\delta}{\rho}}$ and $X = \sqrt{T}$, we get the following function of $X$ and $T_{in}$
\begin{align}
    f(X, T_{in})
        & = \alpha_1(1 - \gamma)^{T_{in}/ 2}X^2 - \alpha_2 X + C_0
\end{align}
The inequality $f(X, T_{in}) \le 0$ has a solution if and only if $\alpha_2^2 - 4C_0\alpha_1(1-\gamma)^{T_{in} / 2} \ge 0$ \ie{}
\[
    T_{in} \ge 2\frac{\ln{\frac{\alpha_2^2}{4\alpha_1C_0}}}{\ln{1 - \gamma}}
\]
Taking the minimal value for $T_{in}$ \ie{} $T_{in} = 2\frac{\ln{\frac{\alpha_2^2}{4\alpha_1C_0}}}{\ln{1 - \gamma}} = \frac{\ln{\frac{1}{\delta}} + \ln{6\sqrt{2\rho}C1}}{\ln{\frac{1}{1-\gamma}}} $ yields
\[
    f(X, T_{in}) = \frac{\alpha_2^2}{4C_0}X^2 - \alpha_2X + C_0
        = \frac{\alpha_2^2}{4C_0}(X - \frac{2C_0}{\alpha_2})^2
\]
for $X = \frac{2C_0}{\alpha_2} = \frac{\sqrt{2\rho} C_0}{\sqrt{\delta}}$ \ie $T = \frac{2\rho C_0^2}{\delta}$.
\end{proof}

\end{document}